\theoremstyle{plain}
\newtheorem{thm}{Theorem}[section]
\newtheorem*{thm*}{Theorem}
\newtheorem{prop}[thm]{Proposition}
\newtheorem*{prop*}{Proposition}
\newtheorem*{conj*}{Conjecture}
\newtheorem{lem}[thm]{Lemma}
\newtheorem*{lem*}{Lemma}
\newtheorem{cor}[thm]{Corollary}
\newtheorem*{cor*}{Corollary}
\theoremstyle{definition}
\newtheorem*{ack*}{Acknowledgements}
\newtheorem*{app*}{Application}
\newtheorem*{apps*}{Applications}
\newtheorem{defn}[thm]{Definition}
\newtheorem*{defn*}{Definition}
\newtheorem{eg}[thm]{Example}
\newtheorem*{eg*}{Example}
\newtheorem*{egs*}{Examples}
\newtheorem*{ex*}{Exercise}
\newtheorem*{quest*}{Question}
\newtheorem{rem}[thm]{Remark}
\newtheorem*{rem*}{Remark}
\newtheorem*{rems*}{Remarks}
\newtheorem*{prob*}{Problem}
\newtheorem*{soln*}{Solution}
\providecommand{\Z}{{\mathbb{Z}}}
\providecommand{\Q}{\mathbb{Q}}
\providecommand{\C}{{\mathbb{C}}}
\providecommand{\cdh}{\mathrm{cdh}}
\providecommand{\Zar}{\mathrm{Zar}}
\providecommand{\Bl}{\operatorname{Bl}}
\providecommand{\coker}{\operatorname{coker}}
\providecommand{\codim}{\operatorname*{codim}}
\providecommand{\Div}{\operatorname{Div}}
\providecommand{\Gr}{\operatorname{Gr}}
\providecommand{\im}{\operatorname{im}}
\providecommand{\length}{\operatorname{length}}
\providecommand{\NS}{\operatorname{NS}}
\providecommand{\Pic}{\operatorname{Pic}}
\providecommand{\sPic}{\mathbf{Pic}}
\providecommand{\rank}{\operatorname{rank}}
\providecommand{\Sing}{\operatorname{Sing}}
\providecommand{\Spec}{\operatorname{Spec}}
\providecommand{\sign}{\operatorname{sign}}
\newcommand{\leftexp}[2]{{\vphantom{#2}}^{#1}{#2}}
\renewcommand{\tilde}[1]{\widetilde{#1}}
\begin{document}


\title{On the negative $K$-theory of singular varieties}
\author{Justin Shih}
\address{}
\email{justin.shih@gmail.com}
\date{}
\keywords{Algebraic $K$-theory, Negative $K$-theory}
\subjclass[2010]{19D35, 19E08}
\begin{abstract}
Let $X$ be an $n$-dimensional variety over a field $k$ of characteristic zero, regular in codimension $1$ with singular locus $Z$. In this paper we study the negative $K$-theory of $X$, showing that when $Z$ is sufficiently nice, $K_{1-n}(X)$ is an extension of $KH_{1-n}(X)$ by a finite dimensional vector space, which we compute explicitly. We also show that $KH_{1-n}(X)$ almost has a geometric structure. Specifically, we give an explicit $1$-motive $[L \rightarrow G]$ and a map $G(k) \rightarrow KH_{1-n}(X)$ whose kernel and cokernel are finitely generated abelian groups.
\end{abstract}

\maketitle


\section{Introduction} \label{section:introduction}
Historically, the computation of the algebraic $K$-theory of schemes has been a difficult problem. Progress has steadily been made over the past few decades, and in this paper we focus on the \emph{negative} $K$-theory of varieties in characteristic $0$. These groups tend to be more accessible than those in positive degree. For example, it is well known that the negative $K$-theory of regular schemes vanish. For singular schemes, some progress has been made for schemes in low dimension \--- for example, see Weibel \cite{Wei01} for the case of normal surfaces.

\medskip
Let $X$ be integral $n$-dimensional scheme ($n \ge 3$) of finite type over an algebraically closed field $k$ of characteristic zero, such that $X$ has only isolated singularities. In this paper, we give a full description of $K_{-2}(X)$ when $n = 3$, and partially generalize our findings to a description of $K_{1-n}(X)$ when $Z = \Sing(X)$ is either smooth or of codimension greater than $2$.

\medskip
With a little work, we establish an exact sequence

\begin{equation} \label{eq:exact sequence calculating K_n}
\xymatrix{
NK_{1-n}(X) \ar[r] & K_{1-n}(X) \ar[r] & KH_{1-n}(X) \ar[r] & 0
}
\end{equation}

\medskip
\noindent which computes $K_{1-n}(X)$. We compute the contributions $NK_{1-n}(X)$ and $KH_{1-n}(X)$ separately, and then determine how they fit together. More specifically, we compute $KH_{1-n}(X)$ and determine the image of $NK_{1-n}(X)$ in $K_{1-n}(X)$.

\medskip
When $X$ is a irreducible $n$-dimensional scheme of finite type over a field $k$ (not necessarily algebraically closed) of characteristic zero, with at most isolated singularities, we show that the image of $NK_{1-n}(X)$ in $K_{1-n}(X)$ is a finite-dimensional $k$-vector space.

\medskip
On the other hand, when $X$ is an integral $n$-dimensional scheme of finite type over an algebraically closed field $k$ of characteristic zero, such that $Z = \Sing(X)$ is either smooth over $k$ or of codimension greater than $2$, we relate $KH_{1-n}(X)$ to the largest torsion-free mixed Hodge structure $H$ in $H^n(X,\Z)$ of type $\{(0,0),(0,1),(1,0),(1,1)\}$ such that $\Gr_1^W \!\! H$ is polarizable.

\medskip
The main theorem, in the case that $X$ is a complex threefold with isolated singularities, exemplifies almost all of the interesting phenomena that occur in the general case.

\begin{thm}[Main theorem for $K_{-2}(X)$ of a complex threefold with isolated singularities]
Let $X$ be a integral three-dimensional variety of finite type over $\C$ with only isolated singularities. Then there is a short exact sequence

\begin{equation}
\xymatrix{
0 \ar[r] & V \ar[r] & K_{-2}(X) \ar[r] & KH_{-2}(X) \ar[r] & 0,
}
\end{equation}

\medskip
\noindent where $V$ is a finite-dimensional $\C$-vector space (explicitly computed in section \ref{sec:NK}), and $KH_{-2}(X)$ has the following description. Let $H \subseteq H^3(X,\Z)$ be the largest torsion-free mixed Hodge structure of type $\{(0,0),(0,1),(1,0),(1,1)\}$ such that $\Gr_1^W\!\!H$ is polarizable, and let $M = [L \longrightarrow G]$ be a $1$-motive corresponding to $H$ under Deligne's equivalence between $1$-motives and mixed Hodge structures of the given type. Then there is a left-exact sequence

\begin{equation}
\xymatrix{
0 \ar[r] & L(\C) \ar[r] & G(\C) \ar[r]^-{\alpha} & KH_{-2}(X)
}
\end{equation}

\medskip
\noindent such that $\coker(\alpha)$ is a finitely generated abelian group.
\end{thm}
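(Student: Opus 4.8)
The plan is to reduce the computation of $K_{-2}(X)$ to two inputs: the Weibel-style exact sequence $NK_{-2}(X)\to K_{-2}(X)\to KH_{-2}(X)\to 0$ quoted in \eqref{eq:exact sequence calculating K_n}, and a separate analysis of the homotopy $K$-theory term $KH_{-2}(X)$. For the $NK$-term one uses that, in characteristic zero, $NK_*(X)$ is a module over the big Witt vectors $W(k)$ and is built (via the theorems of Cortiñas--Haesemeyer--Weibel on the cdh-fibration sequence relating $K$ and $KH$) out of the cdh-cohomology of the sheaves $\Omega^j$; since $X$ has isolated singularities and $\dim X = 3$, only finitely many such cohomology groups contribute in degree $-2$, and each is a finite-dimensional $k=\C$-vector space. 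This identifies $V := \operatorname{im}(NK_{-2}(X)\to K_{-2}(X))$ as a finite-dimensional $\C$-vector space and pins down the short exact sequence $0\to V\to K_{-2}(X)\to KH_{-2}(X)\to 0$; the explicit formula for $V$ is deferred to the section on $NK$.

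\medskip
For the geometric description of $KH_{-2}(X)$, the strategy is to exploit that $KH$ satisfies cdh-descent, so $KH_{-2}(X)$ is computed by a descent spectral sequence from a resolution of singularities $\pi\colon \widetilde X\to X$ with exceptional divisor $E$ and its special fiber $E_Z\to Z$. Since $\widetilde X$ is regular, $KH_*(\widetilde X)=K_*(\widetilde X)$ vanishes in degree $-2$, and more generally the negative $KH$-groups of the smooth strata vanish or are understood; the upshot is a Mayer--Vietoris-type identification of $KH_{-2}(X)$ with a group assembled from $H^*_{\mathrm{cdh}}(X,\mathcal{O})$, $H^*_{\mathrm{cdh}}(X,\mathbb{Z}(1))$-type data and the Picard/Brauer contributions of the blowup square. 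Concretely, $KH_{-2}$ in this range is expressible through the hypercohomology of the truncation $\mathbb{Z}(1)\simeq \mathcal{O}^*[-1]$ together with the constant sheaf $\mathbb{Z}$, i.e.\ through a combination of $H^2$ and $H^3$ of these. Matching Hodge types, one finds that the relevant invariant is precisely $H^3(X,\mathbb{Z})$ with its mixed Hodge structure, cut down to the part of weights $\le 1$ and Hodge coniveau consistent with type $\{(0,0),(0,1),(1,0),(1,1)\}$ — that is, the sub-MHS $H$ in the statement. By Deligne's equivalence, $H$ corresponds to a $1$-motive $M=[L\to G]$ with $G$ an extension of an abelian variety by a torus; the map $\alpha\colon G(\C)\to KH_{-2}(X)$ is then constructed by sending the torus part to $\mathbb{Z}(1)$-classes (units, via a cycle or regulator map landing in $KH$) and the abelian-variety part to the corresponding classes coming from $H^3$, compatibly with the weight filtration.

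\medskip
To establish left-exactness and the finiteness of $\operatorname{coker}(\alpha)$: injectivity of $L(\C)\hookrightarrow G(\C)$ is automatic from the $1$-motive structure (the lattice $L$ injects into the semiabelian $G$ after choosing the lift), and injectivity of the induced $G(\C)/L(\C)\to KH_{-2}(X)$ — equivalently exactness of $0\to L(\C)\to G(\C)\xrightarrow{\alpha} KH_{-2}(X)$ — should follow because the kernel of $\alpha$ is forced to be the "lattice" relations, which are exactly $L(\C)$; here one compares the weight filtration on $KH_{-2}$ (coming from the descent spectral sequence) with the weight filtration on $H$. The cokernel of $\alpha$ is whatever in the descent spectral sequence does not come from the weight-$\le 1$, correct-Hodge-type part: the discarded pieces are (a) the part of $H^3(X,\mathbb{Z})$ of weight $\ge 2$ or of the wrong Hodge type — but those contribute only finitely generated abelian groups to $KH_{-2}$ because the non-divisible, "arithmetic" part of a polarizable weight-$2$ or higher piece is finitely generated — and (b) étale/torsion contributions ($H^1$ of a constant or finite sheaf, Brauer-type $H^2$), which are finitely generated since $X$ is of finite type. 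Assembling these shows $\operatorname{coker}(\alpha)$ is finitely generated abelian.

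\medskip
The main obstacle I expect is the identification of $KH_{-2}(X)$ with precisely the right sub-mixed-Hodge-structure $H$ of $H^3(X,\mathbb{Z})$ — not merely up to finitely generated groups, but getting the correct divisible/semiabelian part on the nose so that $\alpha$ is genuinely left-exact. This requires: (i) careful bookkeeping in the cdh/Nisnevich descent spectral sequence for $KH$ around degree $-2$ on a $3$-fold, controlling which $H^p_{\mathrm{cdh}}(X,\mathbb{Z}(q))$ survive; (ii) matching these motivic cohomology groups with Betti/Hodge cohomology via the Beilinson--Deligne comparison and Deligne's $1$-motive dictionary, including the subtlety that only the part with polarizable $\Gr_1^W$ and the prescribed Hodge type appears; and (iii) verifying that the complementary terms really are finitely generated, which rests on polarizability (to control the weight-$1$ piece) and on finite generation of $H^*(X,\mathbb{Z})$ and of the relevant Brauer groups. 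The hypotheses that $Z=\operatorname{Sing}(X)$ is smooth or of codimension $>2$, and that $X$ is regular in codimension $1$, are exactly what make step (i) tractable, by limiting the exceptional-divisor contributions to the descent spectral sequence.
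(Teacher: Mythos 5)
Your treatment of the $NK$ half is essentially the paper's: Weibel's spectral sequence $N^pK_q\Rightarrow KH_{p+q}$ plus the $K$-dimension theorem gives $NK_{-2}(X)\to K_{-2}(X)\to KH_{-2}(X)\to 0$, and the Cortiñas--Haesemeyer--Walker--Weibel identification $NK_{-2}(U)\cong H^2_{\cdh}(U,\mathcal{O})\otimes_{\Q}t\Q[t]$ (after localizing to an affine neighborhood of the singularity) makes $V$ a finite-dimensional vector space; you should still note that one must identify the \emph{image} of $NK_{-2}$, which the paper does by observing that the $E^1$-differential is evaluation at $t=1$, so the image is exactly $H^2_{\cdh}(U,\mathcal{O})$.

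The genuine gap is in the $KH_{-2}$ half: you never construct $\alpha$ nor prove left-exactness, and the mechanism you gesture at (a Beilinson--Deligne/regulator comparison sending the torus to ``$\Z(1)$-classes'' and the abelian part to ``classes from $H^3$'', with the kernel ``forced to be the lattice relations by comparing weight filtrations'') is an assertion of the conclusion, not an argument. The paper's route has concrete content at each step that your sketch omits: (i) cdh-descent reduces to $KH_{-1}(E)$ for the exceptional divisor of a good resolution, and the descent spectral sequence (with $a_{\cdh}K_1=\mathbb{G}_m$, $a_{\cdh}K_0=\Z$, $a_{\cdh}K_{<0}=0$) yields $0\to H^2_{\cdh}(E,\mathbb{G}_m)\to KH_{-1}(E)\to H^1_{\cdh}(E,\Z)\to 0$ only after proving the differential $d_2^{0,0}$ vanishes (a diagram chase with the rank map at a closed point); this injectivity of the edge map is precisely what later forces $\ker\alpha$ to be the image of the lattice, and it is special to $n=3$. (ii) $H^2_{\cdh}(E,\mathbb{G}_m)$ is computed from the \v{C}ech/simplicial spectral sequence of the cover $\{E_i\}$, where a moving lemma (good divisors, Bertini) is needed to kill $d_2^{n-3,1}$, giving $0\to H^2(\mathcal{D}(E),k^{\times})\to H^2_{\cdh}(E,\mathbb{G}_m)\to\coker(\Pic)\to 0$; pulling back along $\coker(\Pic^0)\to\coker(\Pic)$ produces the semiabelian $G_E$ and the map $\alpha$, with $\ker\alpha$ the image of $\ker(\NS)$ and $\coker\alpha$ an extension of $H^1(\mathcal{D}(E),\Z)$ by $\coker(\NS)$ --- that is where finite generation comes from, not from a general ``polarizable pieces contribute finitely generated groups'' principle. (iii) The identification with $H^3(X,\Z)$ is not a regulator comparison but the Barbieri-Viale--Rosenschon--Saito theorem matching this Picard-theoretic $1$-motive with the largest admissible sub-MHS of $W_2H^2(E(\C),\Z)$, followed by the blowup long exact sequence; for $n=3$ this gives only a surjection on lattices (the lattice genuinely depends on the resolution, as the paper's blowup example shows), so one must check that $L_E$ and $L$ have the same image in $G$ to keep exactness at $G(\C)$. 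Finally, your description of $H$ as the weight $\le 1$ part is off: type $\{(0,0),(0,1),(1,0),(1,1)\}$ includes weight $2$, and the weight-$2$ piece is exactly what produces the lattice $L$.
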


\medskip
Morally, the main theorem says that $K_{-2}(X)$ is an extension of something that, up to some finitely generated abelian groups, is isomorphic to the $\C$-points of a $1$-motive, by a finite-dimensional vector space.

\subsection{Notation and outline of paper} \label{subsection:notation}
In this section, we introduce the problem and give a brief history. We then state the main result, establish notation, and give an outline of the paper. The computation of $KH_{1-n}(X)$ will take up the majority of this paper, and sections \ref{sec:descent for KH} through \ref{sec:independence of resolution} are dedicated to this computation. In section \ref{sec:descent for KH}, we take a (good) resolution of singularities for $X$, and then apply a descent argument to establish an exact sequence \eqref{eq:kh es, arbitrary n} computing $KH_{1-n}(X)$. We then compute each of the contributions in section \ref{sec:H^(n-1)(E,G_m)}. The main piece of $K_{1-n}(X)$ is a $1$-motive that arises out of this computation, which is the focus of section \ref{sec:1-motive}. These computations are done after picking a resolution of singularities for $X$, and it is natural to ask what parts, if any, are independent of the choice of resolution. We address this question in section \ref{sec:independence of resolution}. In section \ref{sec:NK}, we compute $NK_{1-n}(X)$ and describe its image in $K_{1-n}(X)$ under the map given in \eqref{}, wrapping up the computation of $K_{1-n}(X)$.

\medskip Throughout this paper, $k$ will denote a field of characteristic $0$. $Sch/k$ will denote the category of separated schemes over $k$ of finite type, and $Z = \Sing(X)$ will denote the singular locus of $X$. Starting in section \ref{sec:1-motive}, we will need to refer to both Picard groups and the schemes that represent them; to avoid confusion, we will let $\sPic(X)$ denote the Picard scheme of $X$, whenever it exists, and similarly for $\sPic^0(X)$.


\section{Calculation of $KH_{1-n}(X)$}
\label{sec:descent for KH}
In this section, in addition to our assumptions about $X$ laid out in \ref{subsection:notation}, we will also assume that $X$ reduced, and either $\codim Z > 2$ or that $Z$ is smooth. We begin by considering a good resolution of singularities $p \colon \tilde{X} \longrightarrow X$, i.e. a proper birational map which is an isomorphism outside of $Z$, and such that the exceptional divisor $E$ is a simple normal crossing divisor. $KH$ satisfies \emph{cdh}-descent \cite{Hae04}, and applying it to our good resolution yields a long exact sequence of $KH$ groups

\begin{equation} \label{eq:cdh-descent kh les}
\xymatrix{
\cdots \ar[r] & KH_{-q}(X) \ar[r] & KH_{-q}(Z) \oplus KH_{-q}(\tilde{X}) \ar[r] & KH_{-q}(E) \ar[r] & \cdots
}
\end{equation}

\medskip
Since $\tilde{X}$ is smooth, its $KH$-groups agree with its $K$-groups, which vanish in negative degree (and the same if $Z$ is smooth). If $\codim Z > 2$, then the $K$-dimension theorem asserts that $K_{-q}(Z) = KH_{-q}(Z) = 0$ for all $q > n-2$. In either case, we obtain natural isomorphisms $K_{1-q}(E) \cong K_{-q}(X)$ for all $q > n-2$.

\medskip
When $Z = \coprod_i Z_i$ has more than one connected component, we will have $KH_{1-n}(X) \cong \oplus_i KH_{2-n}(E_i)$, where $E_i$ is the total transform of $Z_i$. We can compute each of these groups separately, so we may assume that $Z$ is connected. By Zariski's main theorem, $E$ will also be connected, so we will also assume that $E$ has $r$ irreducible components. In any case, to compute $K_{1-n}(X)$, we will compute $K_{2-n}(E)$ instead, using the fact that it has simple normal crossings.

\medskip
There is a \emph{cdh}-descent spectral sequence for $KH$, which we apply to $E$: \cite{Hae04}

\begin{equation} \label{eq:cdh-descent spectral sequence for KH}
E_2^{pq} = H^p_{\cdh}(E,aKH_{-q}) \Longrightarrow KH_{-p-q}(E),
\end{equation}

\medskip
\noindent where $a$ denotes sheafification in the \emph{cdh}-topology. The first thing to note is that since schemes are locally smooth in the \emph{cdh}-topology, the natural map $K \longrightarrow KH$ induces an isomorphism of sheaves $aK_q \cong aKH_q$. We now have a short lemma.

\begin{lem} \label{}
For $q \le 1$, we have the following isomorphism of sheaves on $Sch/k$:

\begin{spacing}{1}
\begin{equation}
a_{\cdh}K_q =
    \begin{cases}
        a_{\cdh}\mathbb{G}_m,   & q = 1 \\
        a_{\cdh}\Z,             & q = 0 \\
        0,                      & q < 0.
    \end{cases}
\label{}
\end{equation}
\end{spacing}
\end{lem}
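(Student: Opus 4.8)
The plan is to compute the cdh-sheafification of the presheaves $K_q$ for $q \le 1$ by reducing, via cdh-local smoothness, to a statement about sheaves on smooth schemes, where the values of $K_q$ are classical. First I would recall that the cdh-topology is generated by the Nisnevich topology together with the abstract blow-up squares, and that (by Hironaka's resolution of singularities, available since $\operatorname{char} k = 0$) every scheme in $Sch/k$ admits a cdh-cover by smooth schemes; concretely, the small cdh-site has ``enough'' smooth objects, so a cdh-sheaf is determined by its restriction to smooth schemes. Hence it suffices to identify the restriction of $a_{\cdh}K_q$ to the subcategory of smooth $k$-schemes, and then observe that $a_{\cdh}$ applied to that restriction recovers the sheaf on all of $Sch/k$.

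On smooth schemes the computation is immediate from known results. For $q < 0$, the negative $K$-theory of a regular scheme vanishes (this is classical, going back to Bass--Quillen), so $K_q$ restricts to the zero presheaf on smooth schemes, and therefore $a_{\cdh}K_q = 0$ for $q < 0$. For $q = 0$, $K_0$ of a regular scheme is generated by vector bundles, and the rank map identifies the presheaf $K_0$, restricted to smooth \emph{connected} schemes, with the constant presheaf $\Z$ up to a summand coming from $\operatorname{Pic}$ and higher codimension cycles; but after cdh-sheafification the reduced part $\tilde K_0$ is killed because locally in the cdh-topology (indeed already Zariski-locally, by taking an open affine cover trivializing a given bundle up to stable isomorphism on strata) every class becomes trivial — more cleanly, one uses that $a_{\cdh}$ of the presheaf $U \mapsto \widetilde{K_0}(U)$ vanishes since $\widetilde{K_0}$ of a local ring is $0$ and the cdh-topology is finer than the Zariski topology on smooth schemes, so the associated sheaf agrees with the Zariski-associated sheaf, which is $0$. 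This leaves $a_{\cdh}K_0 = a_{\cdh}\Z$. For $q = 1$, the determinant gives a natural isomorphism $K_1(R) \cong R^\times \oplus SK_1(R)$, and $SK_1$ of a regular local ring vanishes, so the Zariski sheafification of $K_1$ on smooth schemes is $\mathbb{G}_m$; since the cdh-topology refines the Zariski topology and the relevant presheaf already has vanishing stalks beyond the $\mathbb{G}_m$-part, we get $a_{\cdh}K_1 = a_{\cdh}\mathbb{G}_m$.

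Finally I would assemble these: for each $q \le 1$ the presheaf $K_q$ agrees, after restriction to smooth schemes and Zariski-sheafification, with $0$, $\Z$, or $\mathbb{G}_m$ respectively, and because every object of $Sch/k$ is cdh-locally smooth, the cdh-sheafification of $K_q$ is the cdh-sheafification of this restriction, extended back to $Sch/k$ — giving exactly the three cases in the statement.

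The main obstacle is making precise the passage ``cdh-locally smooth, hence the sheaf is determined on smooth schemes'': one must be careful that $a_{\cdh}$ of a presheaf equals $a_{\cdh}$ of any presheaf with the same restriction to smooth schemes, which requires knowing that the smooth schemes form a dense (generating) subcategory of the cdh-site. This is standard but is the crux; once it is in hand, the rest is a matter of quoting vanishing of negative $K$-theory and $SK_1$ of regular local rings. An alternative route that sidesteps the density point is to invoke that $K \to KH$ induces $a_{\cdh}K_q \cong a_{\cdh}KH_q$ (already noted in the text just before the lemma) together with the Cortiñas--Haesemeyer--Schlichting--Weibel identification of $aKH$ with homotopy $K$-theory sheaves, but the direct argument above is cleaner for the low-degree range $q \le 1$.
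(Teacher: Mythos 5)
Your argument is correct in substance, but it takes a detour that the paper avoids. For $q<0$ you and the paper argue identically: cdh-covers refine to smooth covers (resolution of singularities in characteristic zero) and negative $K$-theory of regular schemes vanishes. The difference is at $q=0,1$: you first restrict to smooth schemes, use vanishing of $\widetilde{K_0}$ and $SK_1$ of \emph{regular} local rings, and then must extend back to all of $Sch/k$ via the point you yourself flag as the crux, namely that smooth schemes form a dense/generating subcategory of the cdh-site (equivalently, that cdh-stalks can be computed using cofinal systems of smooth neighborhoods). This is indeed available in characteristic zero, so your proof closes, but it is unnecessary machinery here: the rank map $K_0\to\Z$ and the determinant $K_1\to\mathbb{G}_m$ are natural transformations defined on all of $Sch/k$, and they are already isomorphisms on \emph{arbitrary} local rings ($K_0(R)=\Z$ since projectives over a local ring are free, and $K_1(R)=R^{\times}$ for $R$ local, with no regularity needed). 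The paper therefore just sheafifies in two steps, Zariski then cdh, observes the maps are isomorphisms on Zariski stalks, and applies $a_{\cdh}$ to the resulting isomorphisms $a_{\Zar}K_0\cong\Z$, $a_{\Zar}K_1\cong\mathbb{G}_m$; smoothness enters only in the $q<0$ case. So your route buys nothing extra for $q=0,1$ and imports the density subtlety gratuitously, while the paper's stalkwise argument is shorter and avoids it; your closing alternative via $aK\cong aKH$ would likewise be heavier than needed.
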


\begin{proof}
First, since every \emph{cdh}-cover has a refinement by smooth schemes, and $K_q(U) = 0$ whenever $U$ is regular and $q < 0$, $a_{\cdh}K_q = 0$ when $q < 0$.

\medskip
We can sheafify both $K_0$ and $\Z$ in two steps, as follows:

\begin{equation} \label{}
\vcenter{\xymatrix{
K_0 \ar[r] \ar[d]^-{\rank} & a_{\Zar}K_0 \ar[r] \ar[d] & a_{\cdh}a_{\Zar}K_0 = a_{\cdh}K_0 \ar[d] \\
\Z \ar[r] & a_{\Zar}\Z \ar[r] & a_{\cdh}a_{\Zar}\Z = a_{\cdh}\Z
}}
\end{equation}

\medskip
The rank map $a_{\Zar}K_0 \longrightarrow \Z$ is an isomorphism on local rings, so $a_{\cdh}K_0 \longrightarrow a_{\cdh}\Z$ is an isomorphism. We have a similar argument for the sheaf $a_{\cdh}K_1$:

\begin{equation} \label{}
\vcenter{\xymatrix{
K_1 \ar[r] \ar[d] & a_{\Zar}K_1 \ar[r] \ar[d] & a_{\cdh}a_{\Zar}K_1 = a_{\cdh}K_1 \ar[d] \\
\mathbb{G}_m \ar[r] & \mathbb{G}_m \ar[r] & a_{\cdh}\mathbb{G}_m
}}
\end{equation}

\medskip
The map $a_{\Zar}K_1 \longrightarrow \mathbb{G}_m$ is an isomorphism on the stalks, since $K_1(R) = R^{\times}$ when $R$ is local.
\end{proof}

\medskip
When there is no ambiguity (for example, when we take cohomology groups), we will sometimes write $K_q$ for $a_{\cdh}K_q$, and similarly for $\mathbb{G}_m$ and $\Z$.

\medskip
The above lemma allows us to conclude that the descent spectral sequence resides in the fourth quadrant. Moreover, the spectral sequence degenerates at page $E_n$, because the \emph{cdh}-cohomological dimension of $E$ is at most $\dim E = n-1$ \cite{SV00}. In particular, the spectral sequence is bounded. The upshot of this analysis is that for $p+q = n-1$, the descent spectral sequence gives us an exact sequence

\begin{equation} \label{eq:kh es, arbitrary n}
\xymatrix{
H^{n-3}_{\cdh}(E,\Z) \ar[r]^-{d_2^{n-3,0}} & H^{n-1}_{\cdh}(E,\mathbb{G}_m) \ar[r] & KH_{2-n}(E) \ar[r] & H^{n-2}_{\cdh}(E,\Z) \ar[r] & 0.
}
\end{equation}

\medskip
To calculate $KH_{2-n}(E)$, we need to know about the map $d_2^{n-3,0} \colon H^{n-3}_{\cdh}(E,\Z) \longrightarrow H^{n-1}_{\cdh}(E,\mathbb{G}_m)$. We mentioned in the introduction that the case $n = 3$ differs from the general case $n > 3$; we see an example of this below.

\begin{lem}
In the case $n = 3$, the differential $d_2^{0,0}$ is zero.
\label{d_2^{0,0} is the zero map}
\end{lem}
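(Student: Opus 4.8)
The strategy is to exhibit the generator of the source as a permanent cycle of the descent spectral sequence \eqref{eq:cdh-descent spectral sequence for KH}. First I would observe that, since $E$ is connected, the source $E_2^{0,0} = H^0_{\cdh}(E, a_{\cdh}K_0) = H^0_{\cdh}(E,\Z)$ is cyclic, canonically generated by the unit $1$, so it suffices to show $d_2^{0,0}(1) = 0$. Next I would identify this generator with the image of $[\mathcal{O}_E] \in K_0(E) = KH_0(E)$ under the bottom edge homomorphism $KH_0(E) \to H^0_{\cdh}(E, a_{\cdh}KH_0) = H^0_{\cdh}(E,\Z)$: we have $a_{\cdh}KH_0 \cong a_{\cdh}K_0 \cong a_{\cdh}\Z$ via the rank map (the first isomorphism noted just before the preceding lemma, the second proved in it), and $\mathcal{O}_E$ has rank one at every point, so its class maps to the constant section $1$.

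The key point is then the standard fact that, for a bounded (here fourth-quadrant) convergent descent spectral sequence, the image of the edge map $KH_0(E) \to E_2^{0,0}$ is exactly $E_\infty^{0,0} = \bigcap_{r\ge 2}\ker d_r^{0,0}$, since $E_2^{0,0}$ receives no differentials and is the top graded piece $F^0/F^1$ of the finite filtration on $KH_0(E)$. Hence $1 \in H^0_{\cdh}(E,\Z)$, lying in the image of this edge map, is annihilated by every outgoing differential, in particular by $d_2^{0,0}$; as $1$ generates the source, $d_2^{0,0} = 0$. Equivalently, one can argue by naturality along the structure map $E \to \Spec k$: the class $1$ is pulled back from $K_0(k) = H^0_{\cdh}(\Spec k,\Z)$, the corresponding differential for $\Spec k$ vanishes because $H^2_{\cdh}(\Spec k,\mathbb{G}_m) = 0$ (cdh- and Nisnevich-cohomology agree on a field), and functoriality of \eqref{eq:cdh-descent spectral sequence for KH} transports this vanishing to $E$.

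I do not expect a genuine obstacle here; the only care needed is bookkeeping — using that $E$ is connected so the source is cyclic (already arranged before the statement), and checking that the edge-map description really produces a \emph{generator} of $E_2^{0,0}$ rather than some multiple of it, which is why the precise identification of $[\mathcal{O}_E]$ with $1$ under $a_{\cdh}KH_0 \cong a_{\cdh}\Z$ matters. It is worth remarking that the argument is specific to $n = 3$: for $n > 3$ the analogous differential emanates from $H^{n-3}_{\cdh}(E,\Z)$ with $n-3 \ge 1$, which is not generated by the image of $KH_0(E)$, so there a different and more elaborate analysis is needed.
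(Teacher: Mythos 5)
Your proof is correct and takes essentially the same approach as the paper: both arguments show that the edge map $KH_0(E)\to E_2^{0,0}=H^0_{\cdh}(E,\Z)\cong\Z$ is surjective, so that $E_\infty^{0,0}=E_2^{0,0}$ and every differential leaving $(0,0)$, in particular $d_2^{0,0}$, must vanish. The only difference is cosmetic: you hit the generator directly with $[\mathcal{O}_E]$ (or by pullback from $\Spec k$), while the paper establishes the same surjectivity by a diagram chase through a closed point $P\in E$, using the surjectivity of the rank map and connectedness of $E$.
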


\begin{proof}
Let $P$ be a closed point of $E$, and consider the diagram

\begin{equation}
\vcenter{\xymatrix{
K_0(E) \ar[r] \ar@{>>}[d]_-{\rank} & KH_0(E) \ar@{>>}[r] \ar[d] & E_{\infty}^{0,0}(E) \ar@{>->}[r] \ar[d] & E_2^{0,0}(E) \ar[d] \\
K_0(P) \ar@{=}[r]         & KH_0(P) \ar@{=}[r]         & E_{\infty}^{0,0}(P) \ar@{=}[r]          & E_2^{0,0}(P) = \Z
}}
\label{}
\end{equation}

\noindent obtained from naturality of both the map $K \longrightarrow KH$ and the descent spectral sequence. Since we assume $E$ is connected, the vertical map on the right, $H_{\cdh}^0(E,\Z) \longrightarrow H_{\cdh}^0(P,\Z)$ is an isomorphism. Furthermore, the rank map on the left is surjective since there are vector bundles on $E$ of any rank. A diagram chase shows that the map $E_{\infty}^{0,0}(E) \longrightarrow E_2^{0,0}(E)$ is an isomorphism.
\end{proof}

\medskip
Thus when $n = 3$, the descent spectral sequence reduces to a short exact sequence

\begin{equation} \label{eq: kh_ses, n = 3}
\xymatrix{
0 \ar[r] & H_{\cdh}^2(E,\mathbb{G}_m) \ar[r] & KH_{-1}(E) \ar[r] & H_{\cdh}^1(E,\Z) \ar[r] & 0. \\
}
\end{equation}

\medskip
To compute the \emph{cdh}-cohomology groups appearing in \eqref{eq:kh es, arbitrary n}, we take a small detour to recall several constructions associated to the simple normal crossing divisor $E$. The simplicial and semisimplicial schemes associated to $E$, denoted $\Delta_{\bullet}E$ and $\Delta_{\bullet}^{alt}E$ respectively, are constructed as follows.

\begin{equation} \label{eq: defn of semisimplicial, simplicial schemes associated to SNCD}
\begin{split}
\Delta_{\bullet} E & = \coprod_{i_0, \ldots, i_p} (E_{i_0} \times_E \cdots \times_E E_{i_p}) \\
\Delta_{\bullet}^{alt}E & = \coprod_{i_0 < \cdots < i_p} (E_{i_0} \times_E \cdots \times_E E_{i_p}),
\end{split}
\end{equation}

\medskip
\noindent with the face maps $d_{p,j}$ given by the natural projections from the fiber products and the degeneracy maps $s_{p,j}$ induced by the diagonal maps (isomorphisms) $E_j \longrightarrow E_j \times_E E_j$. The simplicial scheme $\Delta_{\bullet}E$ has both face and degeneracy maps, whereas the semisimplicial scheme $\Delta_{\bullet}^{alt}E$ has only face maps.

\medskip
Let $\mathcal{V}_k$ denote the additive category of $k$-varieties, where the objects are $k$-varieties, and the morphisms are formal $\Z$-linear combinations of actual morphisms of varieties. From $\Delta_{\bullet}^{alt}E$ we can construct a complex of varieties $C_{\bullet}(\Delta_{\bullet}^{alt}E)$ in $\mathcal{V}_k$ in the standard way by taking the differentials to be the alternating sum of the face maps, i.e. $\partial_p = \sum_i (-1)^id_i$ (and similarly for $\Delta_{\bullet}E$).

\medskip
A related construction is that of the \emph{dual complex} associated to $E$ which we denote, following \cite{Kol12}, by $\mathcal{D}(E)$. It is a CW-complex constructed as follows. For each component $E_i$ of $E$ we have a vertex, which we label $i$. Then for each (connected) component of each intersection $E_i \cap E_j$, we glue in a $1$-cell between vertices $i$ and $j$ \--- this is the $1$-skeleton of $\mathcal{D}(E)$. Proceeding inductively, we attach an $m$-cell onto the $(m-1)$-skeleton for each connected component of each $m$-fold intersection. Since $E$ has finitely many components, we will eventually stop gluing, and will be left with the CW-complex $\mathcal{D}(E)$.

\medskip
Additionally, different resolutions of $X$ yield different dual complexes $\mathcal{D}(E)$, but it turns out that the homotopy type of $\mathcal{D}(E)$ is independent of the choice of good resolution \cite[Theorem 1.2]{Ste06}. This fact is reflected in our use of the notation $\mathcal{DR}(X)$ to denote the homotopy type of $\mathcal{D}(E)$. On the other hand $\mathcal{D}(E)$ is in general only a cell complex and need not be a simplicial complex.

\medskip
\noindent It is convenient for simplifying upcoming calculations to investigate for which $X$ we can find a resolution whose exceptional divisor $E$ has $\mathcal{D}(E)$ a simplicial complex. Luckily, the answer turns out to be the best possible: such a resolution always exists. We begin with establishing an obvious criterion on $E$ to have $\mathcal{D}(E)$ be a simplicial complex. An intersection $\cap_{i \in I} E_i$ is by definition smooth in a simple normal crossing divisor, so it is the disjoint union of its components. We will call $\cap_{i \in I} E_i$ a \emph{bad intersection} if it is not irreducible. It turns out that bad intersections are the only obstruction for $\mathcal{D}(E)$ to be a simplicial complex.

\begin{lem}
Given a good resolution $p \colon \tilde{X} \longrightarrow X$, the dual complex $\mathcal{D}(E)$ is a simplicial complex if and only if each of the intersections $\bigcap_{i \in I} E_i$ is irreducible.
\label{simplicial dual complex criterion}
\end{lem}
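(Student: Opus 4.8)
The plan is to analyze, directly from its construction, exactly when the natural cell structure on $\mathcal{D}(E)$ is a simplicial complex. First I would record the elementary fact that, since $E$ is a simple normal crossing divisor, every nonempty $\bigcap_{i \in I} E_i$ is smooth, so its connected components coincide with its irreducible components; hence $\bigcap_{i \in I} E_i$ is a bad intersection precisely when it is nonempty and disconnected, and the construction of $\mathcal{D}(E)$ attaches one $(|I|-1)$-cell for each connected component of $\bigcap_{i\in I} E_i$.

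For the ``only if'' direction I would argue by contrapositive. If some $\bigcap_{i\in I} E_i$ is bad, pick two distinct connected components $W_1, W_2$. By construction $\mathcal{D}(E)$ then contains two distinct cells $\sigma_1, \sigma_2$, attached for $W_1$ and $W_2$ respectively, each having the set of vertices indexed by $I$. In any simplicial complex a simplex is determined by its set of vertices, so this forces $\sigma_1 = \sigma_2$, a contradiction; hence $\mathcal{D}(E)$ is not a simplicial complex.

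For the ``if'' direction, assume every $\bigcap_{i\in I} E_i$ is irreducible, i.e.\ connected or empty. Let $K$ be the abstract simplicial complex on the set of irreducible components of $E$ whose simplices are exactly the index sets $I$ with $\bigcap_{i\in I} E_i \neq \emptyset$; this is a legitimate abstract simplicial complex because $J \subseteq I$ and $\bigcap_{i\in I} E_i \neq \emptyset$ force $\bigcap_{i\in J} E_i \supseteq \bigcap_{i\in I} E_i \neq \emptyset$. I would then show, by induction on skeleta, that $\mathcal{D}(E)$ is the geometric realization $|K|$ with its standard cell structure: at the inductive step, for each $I$ with $\bigcap_{i\in I} E_i \neq \emptyset$ the hypothesis yields a single connected component, hence exactly one cell $\sigma_I$ with vertex set $I$; the $j$-th boundary face of $\sigma_I$ is the cell attached to the unique (by the inductive hypothesis) component of $\bigcap_{i\in I\setminus\{j\}} E_i$ containing $\bigcap_{i\in I} E_i$, and these faces are glued along $\partial\Delta^{|I|-1}$ in precisely the pattern of $|K|$. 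Thus $\mathcal{D}(E) = |K|$ is a simplicial complex.

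The step needing the most care is the ``if'' direction: one must verify not merely that distinct cells of $\mathcal{D}(E)$ carry distinct vertex sets, but that all the attaching maps are mutually compatible, so that $\mathcal{D}(E)$ is genuinely the realization of the abstract complex $K$ and not some further quotient of it. This is exactly what the skeletal induction delivers, since irreducibility at every level ensures that each face of each cell is glued onto a single lower-dimensional cell; beyond this bookkeeping there is no real difficulty. (It is also worth noting in passing that cells arising from different index sets are automatically distinct, as they carry different vertex sets.)
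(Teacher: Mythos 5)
Your proof is correct and follows essentially the same route as the paper's: a bad intersection produces two distinct cells with the same vertex set, which is impossible in a simplicial complex, while irreducibility of every intersection gives exactly one cell per index set with all faces present. The only difference is presentational: the paper checks the simplicial-complex axioms directly on the cells (all faces of $D_I$ lie in $\mathcal{D}(E)$ and $D_I \cap D_J = D_{I \cup J}$ is a common face), whereas you identify $\mathcal{D}(E)$ with the realization of the abstract nerve complex $K$ by skeletal induction, which spells out the compatibility of the attaching maps a bit more explicitly but rests on the same underlying observations.
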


\begin{proof}
Suppose $E$ has $m$ irreducible components, and that $\cap_{i \in I} E_i$ is a bad intersection. Then in the construction of the dual complex, we will have multiple $|I|$-cells glued in the same place, so $\mathcal{D}(E)$ cannot be simplicial.

\medskip
\noindent Conversely, suppose $\cap_{i \in I} E_i$ is irreducible, and consider the corresponding $|I|$-simplex $D_I$ in $\mathcal{D}(E)$. All of the faces of $D_I$ are in $\mathcal{D}(E)$, because any such face corresponds to a smaller intersection of the $E_i$, which must be nonempty. Furthermore, for any other subset $J$ of $\{1,\ldots,m\}$, we have $D_I \cap D_J = D_{I \cup J}$, which is a face of both.
\end{proof}

\medskip
\noindent The preceding criterion eliminates disconnected intersections (which correspond to multiple cells glued to the same vertices). We now proceed with the proof that a resolution of $X$ always exists with dual complex $\mathcal{D}(E)$ a simplicial complex. The idea behind the proof was communicated to us by J{\'a}nos Koll{\'a}r.

\begin{prop}
There exists a resolution of $X$ with exceptional divisor $E$ for which $\mathcal{D}(E)$ is a simplicial complex. Moreover, such a resolution can be obtained from any good resolution by further blowups.
\end{prop}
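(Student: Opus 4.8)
The plan is to reduce the general case to the simplicial criterion of Lemma \ref{simplicial dual complex criterion} by showing that any bad intersection can be ``separated'' into distinct simple normal crossing strata using a single blowup, and that finitely many such blowups suffice. So starting from any good resolution $p \colon \tilde{X} \to X$, I would induct on the number of bad intersections, or better, on a lexicographic invariant that records, for each codimension $c$ (starting with the largest), the number of bad $c$-fold intersections among the $E_i$.

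First I would handle the base step of the separation: suppose $\bigcap_{i \in I} E_i = W_1 \sqcup \cdots \sqcup W_s$ with $s \ge 2$, and pick the bad intersection of maximal codimension (equivalently, with $|I|$ maximal), so that $W_1, \ldots, W_s$ are themselves smooth with no further strata of $E$ inside them. I would then blow up $\tilde{X}$ along one of the components, say $W_1$; since $W_1$ is smooth and is a connected component of a (smooth) intersection of the $E_i$, this blowup is still a good resolution, the strict transforms of the $E_i$ together with the new exceptional divisor $E_0$ still form a simple normal crossing divisor, and $E_0 \to W_1$ is a projective bundle, hence $\mathcal D$ of the relevant local picture is replaced by something with one fewer component of $\bigcap_{i\in I} E_i$ (the strict transform of $\bigcap_{i \in I} E_i$ now has components $W_2, \ldots, W_s$, while $W_1$ has been ``resolved off'' into $E_0$, which meets the $E_i$, $i \in I$, in irreducible loci because $E_0$ is a bundle over the irreducible $W_1$). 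The key verification here is that this blowup does not create any new bad intersections of higher codimension: because $W_1$ had maximal codimension among bad intersections, the new exceptional divisor $E_0$ only introduces intersections $E_0 \cap \bigcap_{j \in J} E_j$ which are bundles over $W_1 \cap \bigcap_{j \in J} E_j$, and for $J \subseteq I$ these base loci are irreducible (they are $W_1$ or faces thereof sitting inside $\bigcap_{j \in J}E_j$... here one must be slightly careful, but the point is that $W_1$ is a connected component, and a bundle over a connected — indeed irreducible since we took $|I|$ maximal — base is irreducible).

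Next I would set up the induction bookkeeping. After one blowup as above, the chosen bad intersection has lost a component; repeating, after $s-1$ blowups it is no longer bad. Since there are finitely many components $E_i$, there are finitely many subsets $I$ and hence finitely many bad intersections to begin with, and — by the ``no new higher-codimension bad intersections'' claim — each blowup strictly decreases the lexicographic invariant (count of bad intersections ordered by descending codimension). Therefore the process terminates, and by Lemma \ref{simplicial dual complex criterion} the resulting resolution has $\mathcal D(E)$ a simplicial complex. Since every step was a blowup along a smooth center lying over $Z$, the composite is still a good resolution obtained from $p$ by further blowups, giving both assertions.

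The main obstacle I expect is the claim that blowing up one component $W_1$ of a bad intersection does not spoil the simple normal crossing condition elsewhere and does not generate new bad intersections of equal or higher codimension — in other words, controlling the combinatorics of how the new exceptional divisor $E_0$ meets the old strata. The cleanest way to nail this down is to work in \'etale-local (or formal) coordinates where $E = \{x_1 \cdots x_m = 0\}$ and $W_1$ is cut out by some of the $x_j$ together with functions transverse to all the $E_i$; then $\Bl_{W_1}$ is an explicit toric-type computation on each chart and one reads off that $E_0$ meets $\bigcap_{j \in J} E_j$ in (a bundle over a piece of) $W_1$, which is irreducible by the maximal-codimension choice. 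A secondary, more bookkeeping-heavy point is verifying that working with one bad intersection at a time and one component at a time really does drive the lexicographic invariant down monotonically; this is routine once the local picture is in hand, but it is where one has to be disciplined about the ordering of the blowups (largest codimension first).
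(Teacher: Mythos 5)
Your proposal is correct and follows essentially the same route as the paper: starting from a good resolution, blow up one irreducible component of a bad intersection at a time, working from maximal codimension downward, and track a count of bad components that strictly decreases until Lemma \ref{simplicial dual complex criterion} applies. In fact you supply more justification than the paper does for the key point (that the new exceptional divisor, being a projective bundle over the irreducible center, meets the strata in irreducible loci and so creates no new bad intersections), which the paper leaves as ``easy to verify.''
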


\begin{proof}
Let $p \colon \tilde{X} \longrightarrow X$ be a good resolution with exceptional divisor $E$. We will iteratively blow up enough closed subschemes so that the conditions of Lemma \ref{simplicial dual complex criterion} are satisfied. We begin blowing up components of bad intersections of the smallest dimension, then move up in dimension.

\medskip
\noindent Write $E = \cup_{i = 1}^m E_i$ as the union of its irreducible components, and suppose $E$ has no bad intersections of codimension $> r$. We will blow up components of bad intersections of codimension $r$ one by one; we claim that when we have blown them all up, the resulting divisor will not have any bad intersections of codimension $r$. Write $E_I = \cap_{i \in I} E_i$, and let $B_r(p)$ be the number of connected components $E_I^{(j)}$ that belong to some bad intersection $E_I$ (i.e. $E_I$ has more than one connected component). Fix a bad intersection $E_{I_0}$ of codimension $r$, and without loss of generality, blow up $\tilde{X}$ along the smooth irreducible center $Z = E_{I_0}^{(1)}$. We claim that if $p' \colon \Bl_{E_{I_0}^{(1)}} \tilde{X} \longrightarrow X$, then $B_r(p') = B_r(p)-1$. It is easy to verify that if we continue in this manner, we will eventually remove all of the bad codimension $r$ intersections of $E$. This finishes the proposition.
\end{proof}

\begin{defn}
We call a strong resolution $p \colon \tilde{X} \longrightarrow X$ an \emph{excellent} resolution if the exceptional divisor $E$ is a simple normal crossing divisor and $\mathcal{D}(E)$ is a simplicial complex.
\end{defn}

\medskip
One way that the above constructions are relevant to us is that the collection of maps $\{E_i \longrightarrow E\}_i$ is a \emph{cdh}-cover, so we get a \v{C}ech-to-derived spectral sequence

\begin{equation} \label{}
E_1^{p,q} = H^q_{\cdh}(\Delta_pE,K_m) \Longrightarrow H^{p+q}_{\cdh}(E,K_m),
\end{equation}

\medskip
\noindent for each $m$. We may replace the rows $E_1^{\bullet,q}$ in the $E_1$ page of this spectral sequence with the quasi-isomorphic complexes consisting of only the non-degenerate parts \cite[Lem. 8.3.7]{WHo94}; that is, we may replace $H^q_{\cdh}(\Delta_{\bullet}E)$ with $H^q_{\cdh}(\Delta_{\bullet}^{alt}E)$. By a result of Voevodsky \cite{VSF00}, we may also replace the \emph{cdh}-cohomology groups with Zariski cohomology groups, since the sheaves $a_{\cdh}K_m$ are homotopy invariant sheaves with transfers \cite[Sec. 3.4]{Voe00}. So we obtain

\begin{equation} \label{eq:cdh semisimplicial spectral sequence applied to K_m}
E_1^{p,q} = H^q_{\Zar}(\Delta_p^{alt}E,K_m) \Longrightarrow H^{p+q}_{\cdh}(E,K_m).
\end{equation}

\medskip
For this first quadrant spectral sequence, many terms are zero. First, $\Delta_p^{alt}E = \varnothing$ for $p > n-1 \ge \dim E$, so $E_1^{p,q} = 0$ for $p > n-1$. Additionally, since $\dim \Delta_p^{alt}E \le \dim E - p = n-1-p$, we have $E_1^{p,q} = 0$ for $p + q > n-1$.

\medskip
\noindent We first use this spectral sequence to compute the groups $H_{\cdh}^i(E,\Z)$.

\begin{lem} \label{H^i(E,Z) finitely generated}
$H_{\cdh}^i(E,\Z) \cong H^i(\mathcal{D}(E),\Z)$. In particular, these groups are finitely generated.
\end{lem}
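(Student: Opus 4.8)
The plan is to exploit the spectral sequence \eqref{eq:cdh semisimplicial spectral sequence applied to K_m} in the case $m = 0$, together with the identification $a_{\cdh}K_0 \cong a_{\cdh}\Z$ from the first lemma. So the spectral sequence reads $E_1^{p,q} = H^q_{\Zar}(\Delta_p^{alt}E, \Z) \Rightarrow H^{p+q}_{\cdh}(E,\Z)$. The first observation I would make is that each stratum $\Delta_p^{alt}E = \coprod_{i_0 < \cdots < i_p}(E_{i_0}\cap\cdots\cap E_{i_p})$ is a disjoint union of \emph{smooth irreducible} varieties (smooth because $E$ has simple normal crossings), so each is connected and hence $H^0_{\Zar}(\Delta_p^{alt}E,\Z)$ is the free abelian group on the set of connected components of the $p$-fold intersections — which is exactly the free abelian group on the $p$-cells of the CW-complex $\mathcal{D}(E)$. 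Moreover $H^q_{\Zar}(\text{smooth irreducible variety}, \Z) = 0$ for $q > 0$ (the constant sheaf $\Z$ is flasque on an irreducible space, so has no higher Zariski cohomology). Hence the spectral sequence collapses onto its bottom row $q = 0$.

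Consequently $H^i_{\cdh}(E,\Z)$ is computed as the cohomology of the complex $E_1^{\bullet,0}$ with the differential $d_1$ given by the alternating sum of the (pullback-along-)face maps. I would then check that this complex is, term by term and differential by differential, precisely the cochain complex $C^\bullet(\mathcal{D}(E);\Z)$ computing the cellular (equivalently simplicial, for an excellent resolution) cohomology of the dual complex: a $p$-cell of $\mathcal{D}(E)$ corresponds to a connected component of a $p$-fold intersection $E_{i_0}\cap\cdots\cap E_{i_p}$, and the face map $d_{p,j}$ dropping the index $i_j$ sends such a component into the component of the $(p-1)$-fold intersection containing it — which is exactly the cellular attaching/incidence data. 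So $d_1 = \sum_j(-1)^j d_{p,j}^*$ is the cellular coboundary. Taking cohomology gives $H^i_{\cdh}(E,\Z)\cong H^i(\mathcal{D}(E),\Z)$, and finiteness follows since $\mathcal{D}(E)$ is a finite CW-complex (finitely many components of $E$, finitely many intersections).

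The main obstacle I anticipate is not any single step but rather the bookkeeping in matching the \v{C}ech-style differential $d_1$ of the spectral sequence with the cellular coboundary of $\mathcal{D}(E)$, keeping the signs and the indexing of components consistent; the use of $\Delta^{alt}_\bullet$ (ordered tuples, no degeneracies) is what makes this clean, and is why one passes from $\Delta_\bullet E$ to $\Delta_\bullet^{alt}E$ via \cite[Lem. 8.3.7]{WHo94} at the outset. A secondary point worth stating carefully is the vanishing $H^q_{\Zar}(Y,\Z) = 0$ for $q>0$ when $Y$ is an irreducible variety (so that the spectral sequence genuinely degenerates and no extension problems arise) — this is standard but deserves a one-line justification. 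One should also note that the statement does not require $\mathcal{D}(E)$ to be simplicial: the argument works for the cellular cochain complex of any CW-complex, so the identification holds for an arbitrary good resolution, with the excellent-resolution case being the special instance where the cell complex is a genuine simplicial complex.
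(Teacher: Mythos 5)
Your proof is correct and follows essentially the same route as the paper: apply the semisimplicial \v{C}ech-to-derived spectral sequence with $a_{\cdh}K_0\cong a_{\cdh}\Z$, kill the rows $q>0$ by flasqueness of the constant sheaf $\Z$ on (the irreducible components of) the smooth strata $\Delta_p^{alt}E$, and identify the surviving bottom row with the cellular cochain complex of $\mathcal{D}(E)$. Your added care in matching the alternating-sum differential with the cellular coboundary, and your remark that the identification needs no excellent resolution, are refinements of, not departures from, the paper's argument.
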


\begin{proof}
In addition to the observations we have already made about the spectral sequence, we also have $E_1^{p,q} = 0$ for $q > 0$ since $\Z$ is flasque as a Zariski sheaf. So $H_{\cdh}^i(E,\Z)$ is just the cohomology of the complex

\begin{equation} \label{eq: complex yielding cohomology groups of D(E)}
\xymatrix{
 \cdots \ar[r] & H_{\Zar}^0(\Delta_{i-1}^{alt} E,\Z) \ar[r] & H_{\Zar}^0(\Delta_i^{alt} E,\Z) \ar[r] & H_{\Zar}^0(\Delta_{i+1}^{alt} E,\Z) \ar[r] & \cdots
}
\end{equation}

\medskip
\noindent in degree $i$. Since $H^0_{\Zar}(Y,\Z) = \Z$ for $Y$ smooth and connected, this complex is isomorphic to the cellular chain complex of $\mathcal{D}(E)$

\medskip
Furthermore, since the homotopy type of $\mathcal{D}(E)$ is independent of the choice of resolution \cite{Ste06}, we also have that $H_{\cdh}^i(E,\Z) \cong H^i(\mathcal{DR}(X),\Z)$ is also independent of the choice of resolution.
\end{proof}

\begin{eg}
This same approach allows us to calculate $KH_{-n}(X)$. Applying \emph{cdh}-descent for $KH$ to our resolution of singularities of $X$ yields $KH_{1-n}(E) \cong KH_{-n}(X)$; application of the descent spectral sequence then yields $KH_{1-n}(E) \cong E_2^{n-1,0} = H_{\cdh}^{n-1}(E,\Z)$. The above lemma then tells us that $KH_{1-n}(E) \cong H^i(\mathcal{D}(E),\Z) = H^i(\mathcal{DR}(X),\Z)$. $\hfill \square$
\end{eg}

\medskip
Applying Lemma \ref{H^i(E,Z) finitely generated} to equation \eqref{eq:kh es, arbitrary n}, we see that the kernel and cokernel of the map $H^{n-1}_{\cdh}(E,\mathbb{G}_m) \longrightarrow KH_{1-n}(E)$ are finitely generated. We may rephrase this result by saying that the cohomology group $H^{n-1}_{\cdh}(E,\mathbb{G}_m)$, which is generally large as we will see, approximates $KH_{1-n}(E)$, up to some finitely generated groups.

\medskip
We would now like to compute $H_{\cdh}^{n-1}(E,\mathbb{G}_m)$. We begin by analyzing the spectral sequence \eqref{eq:cdh semisimplicial spectral sequence applied to K_m}.

\section{Simplifying the simplicial spectral sequence}
\label{sec:H^(n-1)(E,G_m)}

We begin with a small, well-known fact.

\begin{lem} \label{lem:G_m no higher cohomologies}
Let $Y$ be a smooth scheme over $k$. Then $H_{\Zar}^q(Y,\mathbb{G}_m) = 0$ whenever $q > 1$.
\end{lem}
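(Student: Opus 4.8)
The plan is to produce an explicit flasque resolution of $\mathbb{G}_m$ of length one on the small Zariski site of $Y$; its existence immediately forces the cohomology to vanish in degrees $\geq 2$. First I would reduce to the case where $Y$ is integral: a smooth $k$-scheme is a finite disjoint union of its connected components, each of which is integral, and Zariski cohomology carries disjoint unions to products, so the general case follows from the integral one.

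So assume $Y$ integral with function field $K = k(Y)$ and generic point $\eta$. I would then invoke the divisor sequence of Zariski sheaves on $Y$,
\[
0 \longrightarrow \mathbb{G}_m \longrightarrow \underline{K^\times} \longrightarrow \bigoplus_{x \in Y^{(1)}} (i_x)_*\Z \longrightarrow 0,
\]
where $\underline{K^\times}$ is the constant sheaf with value $K^\times$, the right-hand term is the direct sum of skyscraper sheaves at the codimension-one points $x$ (with $i_x \colon \Spec \mathcal{O}_{Y,x}/\mathfrak{m}_x \hookrightarrow Y$), and the second map is $f \mapsto (\ord_x f)_x$. The two right-hand sheaves are flasque: a constant sheaf on the irreducible space $Y$ is flasque, and a direct sum of skyscraper sheaves on the Noetherian space $Y$ is flasque. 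The substantive point is exactness of this sequence, and this is exactly where smoothness of $Y$ is used. Since $Y$ is regular, for $x \in Y^{(1)}$ the local ring $\mathcal{O}_{Y,x}$ is a one-dimensional regular local ring, hence a discrete valuation ring, so $\ord_x$ is well defined and surjective on stalks; and since $Y$ is normal one has $\mathcal{O}_Y(U) = \bigcap_{x \in U^{(1)}} \mathcal{O}_{Y,x}$ for every open $U$, so that an $f \in K^\times$ lies in $\mathcal{O}_Y(U)^\times$ if and only if $\ord_x f = 0$ for all $x \in U^{(1)}$ — which is precisely exactness at the middle term. (Injectivity on the left and surjectivity on the right, the latter checked on stalks, are immediate.)

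Finally, since $\mathbb{G}_m$ admits a flasque resolution of length one, $H^q_{\Zar}(Y,\mathbb{G}_m) = 0$ for all $q \geq 2$, which is the claim. The only step that is not purely formal is the verification of exactness of the divisor sequence — the assertion that a rational function which is regular and nonvanishing in codimension one is a global unit — which is the algebraic analogue of Hartogs' phenomenon and relies on $Y$ being regular in codimension one and normal, both consequences of smoothness.
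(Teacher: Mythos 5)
Your argument is essentially the paper's: the paper simply cites the flasque resolution $0 \to \mathbb{G}_m \to \mathscr{K}^{\times} \to \mathrm{CaDiv} \to 0$ by the sheaf of total quotient rings and the sheaf of Cartier divisors, and on a smooth (hence integral on each component and locally factorial) scheme this is exactly your sequence, with $\mathscr{K}^{\times}$ the constant sheaf $\underline{K^{\times}}$ and Cartier divisors identified with Weil divisors; so you have written out the verification the paper leaves implicit. One point is misattributed, though: surjectivity of $\underline{K^{\times}} \to \bigoplus_{x \in Y^{(1)}} (i_x)_*\Z$ on stalks is not ``immediate'' from the fact that the rings $\mathcal{O}_{Y,x}$, $x \in Y^{(1)}$, are discrete valuation rings. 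At a point $y$ of codimension $\ge 2$ the stalk of the right-hand sheaf is the group of Weil divisors of $\Spec \mathcal{O}_{Y,y}$, and surjectivity there asserts that every height-one prime of $\mathcal{O}_{Y,y}$ is principal, i.e.\ that $\mathcal{O}_{Y,y}$ is factorial; this is where regularity is used in full force (regular local rings are UFDs, by Auslander--Buchsbaum), and it genuinely fails for merely normal $Y$ (e.g.\ the quadric cone, where a ruling is not locally principal at the vertex), for which your sequence is exact only on the left and in the middle. With that one sentence supplied, your proof is complete and coincides with the paper's.
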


\begin{proof}
The claim follows immediately from the explicit flasque resolution

\begin{equation} \label{eq:flasque resolution of G_m}
\xymatrix{
0 \ar[r] & \mathbb{G}_m \ar[r] & \mathscr{K}^{\times} \ar[r] & \mathrm{CaDiv} \ar[r] & 0,
}
\end{equation}

\medskip
\noindent where $\mathscr{K}$ is the sheaf of total quotient rings on $Y$, and $\mathrm{CaDiv}$ is the sheaf of Cartier divisors on $Y$.
\end{proof}

\medskip
\noindent Since $\Delta_{\bullet}^{alt}E$ is a smooth semisimplicial scheme, Lemma \ref{lem:G_m no higher cohomologies} tells us that the $E_1$ page of the spectral sequence \eqref{eq:cdh semisimplicial spectral sequence applied to K_m} with $m = 1$ only has two rows, and we need only compute $E_3^{n-1,0}$ and $E_3^{n-2,1}$. The $E_1$ page of this spectral sequence looks like

\begin{equation} \label{eq:simplicial spectral sequence E_1 page}
\vcenter{\xymatrix{
\cdots \ar[r] & \Pic(\Delta_{n-3}^{alt}E) \ar[r]^-{d_1^{n-3,1}} \ar@{-->}[drr]^-{d_2^{n-3,1}} & \Pic(\Delta_{n-2}^{alt}E) \ar[r] & 0 \ar[r]         & 0 \\
\cdots \ar[r] & k(\Delta_{n-3}^{alt}E)^{\times} \ar[r] & k(\Delta_{n-2}^{alt}E)^{\times} \ar[r] & k(\Delta_{n-1}^{alt}E)^{\times} \ar[r] & 0 \\
}}
\end{equation}

\medskip
In order to compute $E_{\infty}^{n-1,0} = E_3^{n-1,0}$, we need to determine the possibly nonzero differential $d_2^{n-3,1}$, which we have denoted using a dashed arrow in the diagram above. Applying the global sections of the resolution \eqref{eq:flasque resolution of G_m} for each $\Delta_p^{alt}E$ in each column yields the following diagram.

\begin{equation} \label{simplicial E0 page}
\vcenter{\xymatrix{
\cdots \ar@{-->}[r] & \Div(\Delta_{n-3}^{alt} E) \ar@{-->}[r] & \Div(\Delta_{n-2}^{alt} E) \ar@{-->}[r] & 0 \\
\cdots \ar@{-->}[r] & k(\Delta_{n-3}^{alt} E)^{\times} \ar@{-->}[r] \ar[u] & k(\Delta_{n-2}^{alt} E)^{\times} \ar@{-->}[r] \ar[u] & k(\Delta_{n-1}^{alt} E)^{\times} \ar[u]
}}
\end{equation}

\medskip
The face maps $d_i$ induce pullback maps on Picard groups, and the $E_1$ differentials are the alternating sum of these pullback maps. The dashed horizontal maps in the above diagram are the alternating sum of the pullbacks on the divisors themselves. They are dashed because they are not necessarily defined on all of the source, only on those divisors $\Div(\Delta_m^{alt}E)$ which intersect $\Div(\Delta_{m+1}^{alt}E)$ transversally. To remedy this, we will find a quasi-isomorphic subcomplex for which the horizontal maps are defined, then use this subcomplex to show that the map $d_2^{n-3,1}$ in \eqref{eq:simplicial spectral sequence E_1 page} is the zero map. Our current discussion motivates the following definition.

\begin{defn} \label{defn: group of good divisors}
For each $p$, we define the \emph{group of good divisors}

\begin{equation}
\Div_g(\Delta_p^{alt} E) = \{ D \in \Div(\Delta_p^{alt}E) \,\, | \,\, D \mbox{ intersects } \Delta_m^{alt}E \mbox{ transversally for all } m > p\}.
\label{}
\end{equation}

\end{defn}

\medskip
\begin{rem} \label{rem:}
By Bertini's theorem, this definition is equivalent to the one that instead requires the image of $D$ under any composition of any of the face maps $d_j$ to be defined. In addition, while the notation $\Div_g$ comes from Carlson \cite{Car85}, our definitions are slightly different. Carlson only requires that the image of $d_j$ is contained in $\Div(\Delta_{p+1}^{alt}E)$, instead of requiring that any composable composition of the $d_j$ is defined. Furthermore, Carlson's definition applies to a more general class of semisimplicial schemes, as we only define $\Div_g$ for semisimplicial schemes associated to the special class of simple normal crossing schemes.
\end{rem}

\medskip
\noindent We will now prove the following:

\begin{lem}[Moving Lemma]
For each $p$, let $A_p$ be the pullback

\begin{equation} \label{}
\vcenter{\xymatrix{
A_p \ar[r] \ar[d] & \Div_g(\Delta_p^{alt} E) \ar[d] \\
k(\Delta_p^{alt} E)^{\times} \ar[r]^-{\beta_p} & \Div(\Delta_p^{alt} E)
}}
\end{equation}

\noindent where $\beta_p$ is the rational function-to-divisor map. Then the vertical maps are a quasi-isomorphism of complexes.
\end{lem}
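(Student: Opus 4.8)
The plan is to show that for each $p$ the inclusion-induced complex $A_\bullet$ is quasi-isomorphic to $k(\Delta_\bullet^{alt}E)^\times$; since quasi-isomorphism is detected degreewise on cohomology, it suffices to check that at each spot the vertical map $A_p \to k(\Delta_p^{alt}E)^\times$ fits into the larger double-complex picture in a way that does not change cohomology. Concretely, form the mapping-cone complex $C_\bullet$ of the map of complexes $A_\bullet \to k(\Delta_\bullet^{alt}E)^\times$ and show $C_\bullet$ is acyclic; equivalently, use the short exact sequence of complexes $0 \to A_\bullet \to k(\Delta_\bullet^{alt}E)^\times \oplus \Div_g(\Delta_\bullet^{alt}E) \to \Div(\Delta_\bullet^{alt}E) \to 0$ coming from the pullback squares, and reduce to showing that the complex of cokernels $\Div(\Delta_\bullet^{alt}E)/\Div_g(\Delta_\bullet^{alt}E)$, or rather the complex $\Div_g(\Delta_\bullet^{alt}E) \hookrightarrow \Div(\Delta_\bullet^{alt}E)$, is a quasi-isomorphism in the appropriate sense. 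So the real content is a moving lemma: \emph{every class in $\Div(\Delta_p^{alt}E)$ is, modulo rational equivalence, represented by a good divisor, and this can be done compatibly with the differential.}

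First I would record the easy local picture. A summand $\Delta_p^{alt}E$ is a disjoint union of strata $E_I = \bigcap_{i\in I}E_i$, each of which is smooth (and, after passing to an excellent resolution, irreducible, by Lemma~\ref{simplicial dual complex criterion} and the preceding proposition). A divisor $D$ on $E_I$ is good precisely when none of its components is contained in, or fails to meet transversally, any deeper stratum $E_J$ with $J \supseteq I$; the deeper strata form a fixed finite arrangement of smooth subvarieties of $E_I$. By a standard moving lemma on a smooth quasi-projective variety — move within the rational equivalence class by choosing a general member of a very ample linear system twist, or invoke Chow's moving lemma for divisors together with Bertini — any divisor class on $E_I$ has a representative meeting all the (finitely many) deeper strata properly, i.e. a good divisor. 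This shows the right-hand vertical map $\Div_g \hookrightarrow \Div$ is surjective on the level of divisor classes, i.e. surjective after quotienting by principal divisors.

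The main obstacle, and the step I would spend the most care on, is making the moving \emph{compatible with the simplicial differentials}, so that one genuinely gets a quasi-isomorphism of complexes and not merely a degreewise cohomology statement. The clean way is to argue with the double complex / spectral sequence: the two columns $A_\bullet$ and $k(\Delta_\bullet^{alt}E)^\times$ sit as the two ``rows'' of a double complex whose third term is $\Div_g \to \Div$, and one shows the complex $[\Div_g(\Delta_\bullet^{alt}E) \to \Div(\Delta_\bullet^{alt}E)]$ (viewed as a column of two-term complexes) is acyclic. Here one uses that $\Div_g$ is defined by a transversality condition that is \emph{stable under the face maps} — that is exactly the point of the definition, and of Remark~\ref{rem:}: the alternating sum of face pullbacks sends good divisors to good divisors, so $\Div_g(\Delta_\bullet^{alt}E)$ is an honest subcomplex. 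Then the cokernel complex $\Div/\Div_g$ computes, in each simplicial degree $p$, the divisors on $E_I$ supported on (or non-transverse to) the deeper strata; since those strata have positive codimension in $E_I$, a localization/purity argument (or: the fact that $\Pic(E_I) \to \Pic(U)$ is an isomorphism for $U$ the complement of a union of smooth subvarieties of codimension $\ge 2$, and surjective with kernel generated by the codimension-one pieces which one handles by a further moving step) shows this cokernel complex is acyclic. Feeding acyclicity of $\Div/\Div_g$ into the five-lemma (or the long exact sequence of the short exact sequence of complexes above) gives that $A_\bullet \to k(\Delta_\bullet^{alt}E)^\times$ is a quasi-isomorphism, which is the assertion.
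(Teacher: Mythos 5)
Your second paragraph, combined with the cartesian square, is in substance the paper's own proof, and it already suffices. Because $A_p$ is the pullback, $A_p$ is simply the group of rational functions whose divisor is good; hence $\ker(\alpha_p)=\ker(\beta_p)$ on the nose, injectivity of $\coker(\alpha_p)\to\coker(\beta_p)$ is the diagram chase in the cartesian square, and surjectivity of $\coker(\alpha_p)\to\coker(\beta_p)$ is exactly your Bertini statement that every divisor class on the smooth projective scheme $\Delta_p^{alt}E$ has a representative meeting the finitely many deeper strata transversally. That is all the lemma claims: it is a statement for each fixed $p$ about the two-term complexes $[A_p\to\Div_g(\Delta_p^{alt}E)]$ and $[k(\Delta_p^{alt}E)^{\times}\to\Div(\Delta_p^{alt}E)]$. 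No compatibility of the moving across different simplicial degrees is needed, because the subsequent replacement of the diagram is justified column by column, and the horizontal differentials are defined on the good subcomplex by the very definition of $\Div_g$ (stability under all face maps). Note also that right-exactness of your proposed sequence $0\to A_\bullet\to k(\Delta_\bullet^{alt}E)^{\times}\oplus\Div_g(\Delta_\bullet^{alt}E)\to\Div(\Delta_\bullet^{alt}E)\to 0$ is not ``coming from the pullback squares''; it is equivalent to the moving statement itself, so the reformulation buys nothing.

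Your third paragraph, which you present as the main obstacle and on which your proof's formal structure rests, is a misreading of what must be proved, and the argument sketched there would not go through. The quotient $\Div(\Delta_\bullet^{alt}E)/\Div_g(\Delta_\bullet^{alt}E)$ is not even a complex in the simplicial direction, since the face maps are only partially defined on $\Div(\Delta_p^{alt}E)$ (that partial definedness is the original problem being fixed), and there is no reason to expect acyclicity, nor is it needed. Worse, the purity input you invoke, that $\Pic(E_I)\to\Pic(U)$ is an isomorphism when $U$ is the complement of smooth subvarieties of codimension at least two, does not apply here: the deeper strata $E_J$ with $|J|=|I|+1$ are divisors in $E_I$, so the kernel of $\Pic(E_I)\to\Pic(U)$ is nontrivial in general, and the ``further moving step'' you gesture at is precisely the content you were trying to avoid proving. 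Once you discard that paragraph and complete your second paragraph with the (easy) kernel identification and the injectivity chase, you have the paper's argument.
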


\medskip
\begin{proof}
We add in the horizontal kernels and cokernels to the diagram above, and label the vertical maps:

\begin{equation}
\vcenter{\xymatrix{
\ker(\alpha_p) \ar[r] \ar@{=}[d] & A_p \ar[r]^-{\alpha_p} \ar[d]^-{v_k} & \Div_g(\Delta_p^{alt} E) \ar[r] \ar[d]^-{v_{\Div}} & \coker(\alpha_p) \ar[d]^-{v_{\coker}} \\
\ker(\beta_p) \ar[r] & k(\Delta_p^{alt} E)^{\times} \ar[r]^-{\beta_p} & \Div(\Delta_p^{alt} E) \ar[r] & \Pic(\Delta_p^{alt}E) \\
}}
\label{}
\end{equation}

\medskip
First, injectivity of $v_{\coker}$ follows from a diagram chase and the fact that the middle square is cartesian. To establish surjectivity of $v_{\coker}$, let $t \in \Div(\Delta_p^{alt}E)$. We would like to lift $t$ to a good divisor on $\Delta_p^{alt}E$. In order to do so, we would like to wiggle $t$ by a principal divisor so that it meets $\Delta_p^{alt}E$ transversally for $q > p$. But since $\Delta_q^{alt}E = \varnothing$ for sufficiently large $q$ and each $\Delta_p^{alt}E$ has a a finite number of components, we may apply Bertini's theorem to find a lift $s \in \Div_g(\Delta_p^{alt}E)$ of $t$.
\end{proof}

\medskip
\noindent By replacing each column $k(\Delta_p^{alt}E)^{\times} \longrightarrow \Div(\Delta_p^{alt}E)$ with the quasi-isomorphic complex obtained from $\Div_g(\Delta_p^{alt}E)$ as in the lemma above, we can replace the diagram \eqref{simplicial E0 page} with the following diagram

\begin{equation}
\vcenter{\xymatrix{
\cdots \ar[r] & \Div_g(\Delta_{n-3}^{alt}E) \ar[r] & \Div_g(\Delta_{n-2}^{alt}E) \ar[r] & 0 \ar[r] & 0 \\
\cdots \ar[r] & A_{n-3} \ar[r] \ar[u] & A_{n-2} \ar[r] \ar[u] & k(\Delta_{n-1}^{alt}E)^{\times} \ar[u] \ar[r] & 0 \\
}}
\label{}
\end{equation}

\medskip
\noindent where all of the horizontal maps are indeed defined. We may then use this diagram to calculate the differential $d_2^{n-3,1}$ that appears in the spectral sequence \eqref{eq:cdh semisimplicial spectral sequence applied to K_m}. We claim this map is zero.

\begin{lem}
The differential $d_2^{n-3,1}$ appearing in the spectral sequence \eqref{eq:cdh semisimplicial spectral sequence applied to K_m} is the zero map.
\end{lem}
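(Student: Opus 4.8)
\noindent The plan is to compute $d_2^{n-3,1}$ head-on, by the usual spectral-sequence zig-zag, carried out inside the good-divisor double complex produced by the Moving Lemma --- precisely so that every arrow appearing in the zig-zag is genuinely defined.

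First I would fix the shape of the relevant page. By the Moving Lemma the spectral sequence \eqref{eq:cdh semisimplicial spectral sequence applied to K_m} with $m=1$ is equally computed by the double complex whose $q=0$ row is $A_\bullet$, whose $q=1$ row is $\Div_g(\Delta_\bullet^{alt}E)$, with vertical maps $A_p\to\Div_g(\Delta_p^{alt}E)$, $(f,D)\mapsto D$, and horizontal maps the alternating sums of the face pullbacks; in particular neither the spectral sequence nor $d_2^{n-3,1}$ changes. On the $E_1$ page this reads $E_1^{p,0}=\mathcal{O}^\times(\Delta_p^{alt}E)=:U_p$ and $E_1^{p,1}=\Pic(\Delta_p^{alt}E)$, with $d_1$ the alternating pullback. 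Since $\dim\Delta_{n-1}^{alt}E=0$ we have $\Div_g(\Delta_{n-1}^{alt}E)=0$, so the target of our differential is a cokernel, $E_2^{n-1,0}=\coker\!\bigl(U_{n-2}\xrightarrow{d_1}U_{n-1}\bigr)$, where $U_{n-1}=H^0(\Delta_{n-1}^{alt}E,\mathbb{G}_m)$ is the unit group of a $0$-dimensional scheme.

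Next comes the zig-zag. Given $\xi\in E_2^{n-3,1}$, choose a representative $\bar\xi\in\Pic(\Delta_{n-3}^{alt}E)$ with $d_1\bar\xi=0$ and lift it to a \emph{good} divisor $D\in\Div_g(\Delta_{n-3}^{alt}E)$ --- possible since good divisors realize every class in $\Pic$. Because $D$ is good, its alternating pullback $\partial D$ is a well-defined divisor, and in fact (by the reformulation in the remark following Definition~\ref{defn: group of good divisors}) an element of $\Div_g(\Delta_{n-2}^{alt}E)$; the cocycle condition $d_1\bar\xi=0$ says exactly that $[\partial D]=0$ in $\Pic(\Delta_{n-2}^{alt}E)$, so $\partial D=\operatorname{div}(g)$ for some $g\in k(\Delta_{n-2}^{alt}E)^\times$. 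Goodness of $D$ also keeps $g$ a unit along every higher face, so the alternating pullback $\partial g\in H^0(\Delta_{n-1}^{alt}E,\mathbb{G}_m)$ is defined, and, up to sign, $d_2^{n-3,1}(\xi)=[\partial g]\in E_2^{n-1,0}$.

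It remains to see that $[\partial g]=0$, i.e.\ that $\partial g$ lies in the image of the alternating pullback $U_{n-2}\to U_{n-1}$; this is the crux, and I expect it to be the main obstacle. Conceptually one expects it because $\partial g$ is ``weight $0$'' ($\mathbb{G}_m$-coefficients, purely combinatorial) while $\xi$ is of positive weight ($\Pic$, carrying the abelian-variety part), so that a geometric differential ought to annihilate it; concretely I would argue by reciprocity. Evaluating $\partial g$ at a point $x$ of $\Delta_{n-1}^{alt}E$ gives an alternating product of the values of $g$ at the finitely many points of $\Delta_{n-2}^{alt}E$ over $x$; since $\operatorname{div}(g)=\partial D$ is itself pulled back from the next-larger stratum $\Delta_{n-3}^{alt}E$, one regroups this total product, curve component by curve component of $\Delta_{n-2}^{alt}E$, into a product of tame symbols to which Weil reciprocity applies on each curve, the boundary contributions matching because $D$ is a single divisor on $\Delta_{n-3}^{alt}E$; what remains is visibly the pullback of a global unit. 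Equivalently, and perhaps cleaner in practice: show that $D$ can be chosen with $\partial D$ the zero divisor rather than merely a principal one (wiggling $D$ by principal divisors while preserving goodness, via Bertini / the Moving Lemma), whereupon $g$ is a global unit and $[\partial g]=0$ for free --- the real content being that a class killed in $\Pic(\Delta_{n-2}^{alt}E)$ is already killed in $\ker\!\bigl(\partial\colon\Div_g(\Delta_{n-3}^{alt}E)\to\Div_g(\Delta_{n-2}^{alt}E)\bigr)$. Everything upstream of this step is formal; the work is in exploiting that $D$ lives on $\Delta_{n-3}^{alt}E$ together with the vanishing $\Div(\Delta_{n-1}^{alt}E)=0$ coming from $\dim\Delta_{n-1}^{alt}E=0$.
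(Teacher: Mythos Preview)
Your framework matches the paper's exactly: use the Moving Lemma to pass to the good-divisor double complex, represent $\xi$ by a good divisor $D\in\Div_g(\Delta_{n-3}^{alt}E)$, write $\partial D=\operatorname{div}(g)$ on $\Delta_{n-2}^{alt}E$, and recognize that $d_2^{n-3,1}(\xi)$ is the class of $\partial g$ in $E_2^{n-1,0}$. You also correctly identify the crux: showing $[\partial g]=0$.

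Where you diverge from the paper is precisely at that crux, and neither of your two proposed finishes is complete. The Weil-reciprocity sketch is too vague to evaluate, and the alternative --- replacing $D$ by a linearly equivalent good divisor with $\partial D=0$ on the nose --- amounts to surjectivity of $\ker\bigl(\partial\colon\Div_g(\Delta_{n-3}^{alt}E)\to\Div_g(\Delta_{n-2}^{alt}E)\bigr)\to\ker(d_1^{n-3,1})$, which you have not established (and which would need its own argument: writing $D'=D+\operatorname{div}(\phi)$ with $\partial D'=0$ requires $\partial\phi$ to kill $g$ up to constants, i.e.\ a lifting statement for $g$ that is not obvious).

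The paper's finish avoids both detours by a simple device you are missing: do \emph{not} take an arbitrary $g$ with $\operatorname{div}(g)=\partial D$, but rather build one explicitly from local equations of $D$. Write $D_{\mathbf i}=D'_{\mathbf i}-D''_{\mathbf i}$ with $D'_{\mathbf i},D''_{\mathbf i}$ effective, locally cut out by $f'_{\mathbf i},f''_{\mathbf i}$; since $D$ is good, these equations are units along the finite set $\Delta_{n-1}^{alt}E$. Set
\[
f_{\mathbf j}\;=\;\prod_{\mathbf i\subsetneq\mathbf j}\bigl(f'_{\mathbf i}/f''_{\mathbf i}\bigr)^{\sign(\mathbf i,\mathbf j)}\Big|_{E_{\mathbf j}}.
\]
Then $\operatorname{div}(f_{\mathbf j})=(\partial D)_{\mathbf j}$, so $g_{\mathbf j}/f_{\mathbf j}$ is a global unit on $E_{\mathbf j}$ and $f$ may replace $g$ in the zig-zag. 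But now $\partial f$, evaluated at any point of $\Delta_{n-1}^{alt}E$, is literally the alternating double product $\prod_{\mathbf j\subsetneq\mathbf k}\prod_{\mathbf i\subsetneq\mathbf j}(\cdots)^{\sign(\mathbf i,\mathbf j)\sign(\mathbf j,\mathbf k)}$, which is $1$ by the simplicial identity $\partial^2=0$. Thus $[\partial g]=[\partial f]\cdot[\partial(\text{unit})]=0$ in $E_2^{n-1,0}$. The content is entirely combinatorial once you pick $g$ correctly; no reciprocity law and no extra moving step is needed.
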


\begin{proof}
\medskip
\noindent Recall that $E$ was assumed to have $r$ irreducible components. We regret having to introduce the following notation. Let $\mathbf{r} = \{1 < \cdots < r\}$, and let us also set

\begin{equation}
I = \left\{\{i_0 < \cdots < i_{n-1}\} \,|\, i_1, \ldots, i_{n-1} \in \mathbf{r} \right\}.
\label{}
\end{equation}

\medskip
\noindent $I$ denotes the set of all ordered subsets of $\{1,\ldots,r\}$ that have length $n$. We will also let $\mathbf{i}$ and $\mathbf{j}$ denote ordered subsets of $\mathbf{m}$ of length $n-2$ and $n-1$, respectively. Keeping tight track of the indices would be a notational burden and detracts from the main thrust of the proof, so there will be some looseness in our usage of $\mathbf{i}$ and $\mathbf{j}$.

\medskip
\noindent If $\mathbf{i} = \{i_0,\ldots,i_{n-2}\}$ and $\mathbf{i} \varsubsetneq \mathbf{j} = \{i_0 \ldots, i_m, a, i_{m+1}, \ldots, i_{n-2}\}$ so that $\mathbf{j}$ is obtained from $\mathbf{i}$ by inserting $a$ after the $m^{\mathrm{th}}$ element of $\mathbf{i}$, then we define $\sign(\mathbf{i},\mathbf{j}) = (-1)^m$.

\medskip
Consider $D \in \Div_g(\Delta_{n-3}^{alt}E)$ that represents an element of $E_2^{n-3,1} = \ker(d_1^{n-3,1})$ (see \eqref{eq:simplicial spectral sequence E_1 page}). The image of $D$ in $\Pic(\Delta_{n-2}^{alt}E)$ is zero, so it pulls back to a rational function $g = (g_{\mathbf{j}}) \in A_{n-2}$.

\medskip
\noindent Write $D = (D_{\mathbf{i}})$ and $D_{\mathbf{i}} = D'_{\mathbf{i}} - D''_{\mathbf{i}}$ such that $D'_{\mathbf{i}}$ and $D''_{\mathbf{i}}$ are effective divisors whose supports intersect in codimension at least two. The divisors $D'_{\mathbf{i}}, D''_{\mathbf{i}}$ are defined locally on open $U$ by the vanishing of sections $f'_{\mathbf{i}}, f''_{\mathbf{i}} \in \Gamma(U,\mathcal{O}_U)$, respectively.

\medskip
\noindent On $E_{\mathbf{j}}$, the divisor $\sum_{\mathbf{i} \varsubsetneq \mathbf{j}} (-1)^{\sign(\mathbf{i},\mathbf{j})}(D_{\mathbf{i}} \cap E_{\mathbf{j}})$ has degree zero, and is locally defined by $f_{\mathbf{j}} \colon \!\!\!\! = \prod_{\mathbf{i} \varsubsetneq \mathbf{j}} (f'_{\mathbf{i}}/f''_{\mathbf{i}})^{(-1)^{\sign(\mathbf{i},\mathbf{j})}}$. Then the divisor defined locally by $g_{\mathbf{j}}/f_{\mathbf{j}}$ has no zeroes or poles hence is constant on $E_{\mathbf{j}}$. This shows that the function defined locally by the $f_{\mathbf{j}}$ is in fact actually a rational function, and gives the same divisor class as $g$. Since $D$ is a good divisor, it meets $\Delta_{n-1}^{alt}E$ transversally, i.e. the support of $D$ does not intersect $\Delta_{n-1}^{alt}E$. Then the zeros of the $f'_{\mathbf{i}}, f''_{\mathbf{i}}$ do not intersect $\Delta_{n-2}^{alt}E$, so we can use the $f_{\mathbf{j}}$ to evaluate $d_2^{n-3,1}(D)$. But then $d_2^{n-3,1}(D)$ is gotten by the composite of two face maps, so it must be trivial.
\end{proof}

\medskip
\noindent Consequently, $E_{\infty}^{n-1,0} = H^{n-1}(\mathcal{D}(E),k^{\times})$, and we have the following corollary.

\begin{cor} \label{cor: SES for H^{n-1}(E,G_m)}
Writing $\coker(\Pic)$ for the cokernel of $d_1^{n-3,1} \colon \Pic(\Delta_{n-3}^{alt}E) \longrightarrow \Pic(\Delta_{n-2}^{alt}E)$, we have a short exact sequence

\begin{equation} \label{H^{n-1}(E,G_m)SES}
\xymatrix{
0 \ar[r] & H^{n-1}(\mathcal{D}(E),k^{\times}) \ar[r] & H^{n-1}_{\cdh}(E,\mathbb{G}_m) \ar[r] & \coker(\Pic) \ar[r] & 0.
}
\end{equation}
\end{cor}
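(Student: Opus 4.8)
The plan is to extract the short exact sequence directly from the convergence of the spectral sequence \eqref{eq:cdh semisimplicial spectral sequence applied to K_m} with $m=1$, now that all the differentials into and out of the relevant spots have been understood. Recall from the discussion preceding the corollary that Lemma \ref{lem:G_m no higher cohomologies} collapses the $E_1$ page to the two rows $q=0,1$, so the only potentially nonzero terms contributing to $H^{n-1}_{\cdh}(E,\mathbb{G}_m)$ in total degree $n-1$ are $E_\infty^{n-1,0}$ and $E_\infty^{n-2,1}$. Since the spectral sequence lives in the first quadrant and is concentrated in two rows, it degenerates at $E_3$, and for these two spots the only differential that can be nonzero is $d_2^{n-3,1}\colon E_2^{n-3,1} \to E_2^{n-1,0}$, which the preceding lemma shows is zero. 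Hence $E_\infty^{n-1,0} = E_2^{n-1,0}$ and $E_\infty^{n-2,1} = E_2^{n-2,1}$.

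Next I would identify the two surviving terms. For $E_\infty^{n-1,0}$: the bottom row of \eqref{eq:simplicial spectral sequence E_1 page} is the complex of function-field unit groups $k(\Delta_\bullet^{alt}E)^\times$ with alternating-sum-of-pullbacks differentials; since each $\Delta_p^{alt}E$ is a disjoint union of smooth connected $k$-varieties, $H^0_{\Zar}(\Delta_p^{alt}E,\mathbb{G}_m)$ is a product of copies of $k^\times$, one per irreducible component, i.e. per $p$-simplex of $\mathcal{D}(E)$ — this uses that $E$ is excellent so that $\mathcal{D}(E)$ is simplicial and its $p$-simplices biject with the components of $\Delta_p^{alt}E$ (alternatively one can phrase it for the CW-complex $\mathcal{D}(E)$ directly, since $k^\times$ is a constant coefficient system and cellular cohomology only sees the cell structure). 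Identifying the differentials with the simplicial coboundary, the $E_2^{n-1,0}$ term is exactly $H^{n-1}(\mathcal{D}(E),k^\times)$. For $E_\infty^{n-2,1} = E_2^{n-2,1}$: the top row computes, in spot $n-2$, the cohomology $\mathrm{coker}\bigl(d_1^{n-3,1}\colon \Pic(\Delta_{n-3}^{alt}E) \to \Pic(\Delta_{n-2}^{alt}E)\bigr)$, because $\Delta_{n-1}^{alt}E$ has dimension $\le 0$ so its Picard group vanishes and there is no incoming differential to quotient by further; this is precisely the group denoted $\coker(\Pic)$ in the statement.

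Finally I would assemble these into the short exact sequence. The abutment filtration on $H^{n-1}_{\cdh}(E,\mathbb{G}_m)$ in total degree $n-1$ has only two nonzero graded pieces, namely $E_\infty^{n-1,0}$ (the bottom of the filtration, a subobject) and $E_\infty^{n-2,1}$ (the top quotient). This yields
\begin{equation*}
0 \longrightarrow E_\infty^{n-1,0} \longrightarrow H^{n-1}_{\cdh}(E,\mathbb{G}_m) \longrightarrow E_\infty^{n-2,1} \longrightarrow 0,
\end{equation*}
which under the identifications above is exactly \eqref{H^{n-1}(E,G_m)SES}. I do not expect a serious obstacle here — the work was all done in the preceding lemmas (the Moving Lemma and the vanishing of $d_2^{n-3,1}$). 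The one point that needs care is the bookkeeping identifying $H^0$ of the two-row complexes with simplicial (co)chains on $\mathcal{D}(E)$: one must check the pullback maps on $\mathbb{G}_m$-sections and on Picard groups induce the correct signs to match the simplicial coboundary, but this is the same computation already used implicitly in Lemma \ref{H^i(E,Z) finitely generated} for the constant sheaf $\Z$, so it goes through verbatim with $\Z$ replaced by $k^\times$.
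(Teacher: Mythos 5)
Your argument is correct and is exactly the route the paper takes: the corollary is read off from the two-row first-quadrant spectral sequence \eqref{eq:cdh semisimplicial spectral sequence applied to K_m} once $d_2^{n-3,1}=0$ is known, identifying $E_\infty^{n-1,0}$ with $H^{n-1}(\mathcal{D}(E),k^\times)$ and $E_\infty^{n-2,1}$ with $\coker(\Pic)$ and using the two-step abutment filtration. Your identification of the bottom row via global units per component (as in Lemma \ref{H^i(E,Z) finitely generated}) is the same bookkeeping the paper leaves implicit, so there is nothing to add.
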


\begin{eg} \label{torus when k algebraically closed}
If $H_{n-2}(\mathcal{D}(E),\Z)$ is torsion-free, or if $k$ contains all roots of unity (e.g. when $k$ is algebraically closed), then we also have $H^{n-1}(\mathcal{D}(E),k^{\times}) = H^{n-1}(\mathcal{D}(E),\Z) \otimes k^{\times}$, via the universal coefficient theorem. In particular, $H^{n-1}(\mathcal{D}(E),k^{\times}) \cong (k^{\times})^r = T_E(k)$, for some $r$, is the $k$-points of some split torus $T_E$.
\end{eg}

\medskip
\noindent The thrust of the next subsection is to show that a $1$-motive naturally arises out of the spectral sequence \eqref{eq:cdh semisimplicial spectral sequence applied to K_m}.

\section{Computation of Picard groups}
\label{sec:1-motive}

Since all of the schemes $\Delta_p^{alt}E$ are projective, the Picard functor is representable for these schemes; in particular, $\sPic^0(\Delta_p^{alt}E)$, the connected component of the Picard scheme $\sPic(\Delta_p^{alt}E)$, exists. Let the N\'{e}ron-Severi group functor, $\NS$, be the presheaf cokernel defined by $\sPic/\sPic^0$. As in Corollary \ref{cor: SES for H^{n-1}(E,G_m)}, whenever there is no ambiguity, we will write $\ker(\NS)$ for the kernel of the induced map on N\'{e}ron-Severi groups $\NS(\Delta_{n-3}^{alt}E) \longrightarrow \NS(\Delta_{n-2}^{alt}E)$, and similarly for the kernels and cokernels of other such maps induced by $\Delta_{n-3}^{alt}E \longrightarrow \Delta_{n-2}^{alt}E$.

\medskip
We are interested in the group $\coker(\Pic)$. Writing $\Pic$ as an extension of $\NS$ by $\Pic^0$ and then applying the snake lemma to the resulting diagram obtained from the map $\Delta_{n-3}^{alt}E \longrightarrow \Delta_{n-2}^{alt}E$ gives us an exact sequence ending in

\begin{equation} \label{extension of NS by Pic^0 diagram}
\xymatrix{
\cdots \ar[r] & \ker(\NS) \ar[r] & \coker(\Pic^0) \ar[r] & \coker(\Pic) \ar[r] & \coker(\NS) \ar[r] & 0.
}
\end{equation}

\medskip
\noindent Taking \eqref{H^{n-1}(E,G_m)SES} and pulling back along the map $\coker(\Pic^0) \stackrel{\beta}{\longrightarrow} \coker(\Pic)$ yields the diagram

\begin{equation} \label{diagram:pullback to get TGA}
\vcenter{\xymatrix{
0 \ar[r] & H^{n-1}(\mathcal{D}(E),k^{\times}) \ar[r] \ar@{=}[d] & G_E(k) \ar[r] \ar[d] & \coker(\Pic^0) \ar[d]^{\beta} \ar[r] & 0 \\
0 \ar[r] & H^{n-1}(\mathcal{D}(E),k^{\times}) \ar[r] & H^{n-1}_{\cdh}(E,\mathbb{G}_m) \ar[r] & \coker(\Pic) \ar[r] & 0 \\
}}
\end{equation}

\medskip
\noindent Applying the snake lemma to this diagram, we see that the two vertical maps on the right have the same kernel and cokernel, and that $\ker(\NS)$ surjects onto $\ker(\beta)$:

\begin{equation} \label{pullback_fg_ker_coker}
\vcenter{\xymatrix{
& & \ker(\NS) \ar@{>>}[d] \ar@{=}[r] & \ker(\NS) \ar@{>>}[d] \\
& & \ker(\beta) \ar@{=}[r] \ar@{>->}[d] & \ker(\beta) \ar@{>->}[d] \\
0 \ar[r] & H^{n-1}(\mathcal{D}(E),k^{\times}) \ar[r] \ar@{=}[d] & G_E(k) \ar[r] \ar[d] & \coker(\Pic^0) \ar[d]^-{\beta} \ar[r] & 0 \\
0 \ar[r] & H^{n-1}(\mathcal{D}(E),k^{\times}) \ar[r] & H^{n-1}_{\cdh}(E,\mathbb{G}_m) \ar[r] \ar@{>>}[d] & \coker(\Pic) \ar[r] \ar@{>>}[d] & 0 \\
& & \coker(\NS) \ar@{=}[r] & \coker(\NS) \\
}}
\end{equation}

\medskip
\noindent Furthermore, since $\NS(\Delta_{\bullet}^{alt}E)$ is a complex, the map $\NS(\Delta_{n-4}^{alt}E) \longrightarrow NS(\Delta_{n-3}^{alt}E)$ factors via $\ker(\NS)$. We note that the composite

\begin{equation} \label{map giving rise to M_E lattice}
\xymatrix{
\NS(\Delta_{n-4}^{alt}E) \ar[r] & \ker(\NS) \ar[r] & \coker(\Pic^0)
}
\end{equation}

\medskip
\noindent is zero; this follows immediately from the square

\begin{equation}
\vcenter{\xymatrix{
\Pic(\Delta_{n-4}^{alt}E) \ar@{>>}[r] \ar[d] & \NS(\Delta_{n-4}^{alt}E) \ar[d] \\
\ker(\Pic) \ar[r] & \ker(\NS)
}}
\label{}
\end{equation}

\medskip
For the rest of this section, let $k$ be algebraically closed and of sufficiently small cardinality so that there is an embedding $k \longrightarrow \C$. We will show that the diagrams

\begin{equation} \label{M'_E(k)}
\left[\vcenter{\xymatrix{
& & \ker(\NS) \ar[d] \\
0 \ar[r] & H^{n-1}(\mathcal{D}(E),k^{\times}) \ar[r] & G_E(k) \ar[r] & \coker(\Pic^0) \ar[r] & 0
}}
\right]
\end{equation}

\medskip
\noindent and

\begin{equation} \label{M_E(k)}
\left[\vcenter{\xymatrix{
& & H^{n-3}(\NS(\Delta_{\bullet}^{alt}E)) \ar[d] \\
0 \ar[r] & H^{n-1}(\mathcal{D}(E),k^{\times}) \ar[r] & G_E(k) \ar[r] & \coker(\Pic^0) \ar[r] & 0
}}
\right]
\end{equation}

\medskip
\noindent are isomorphic to the $k$-points of $1$-motives $M'_E$ and $M_E$, respectively, over $k$.

\medskip
Since $\Pic^0(\Delta_p^{alt}E)$ is representable and $k$ is algebraically closed, the functor of taking $k$-points is exact. In particular, the $k$-points of the cokernel of the map $\sPic^0(\Delta_{n-3}^{alt}E) \longrightarrow \sPic^0(\Delta_{n-2}^{alt}E)$ is the cokernel of the $\Pic^0$ groups, that is, $\coker(\Pic^0)$. In other words, $\coker(\Pic^0)$ is the $k$-points of the corresponding abelian variety $\coker(\sPic^0)$.

\medskip
Similarly, the group $H^{n-1}(\mathcal{D}(E),k^{\times}) \cong (k^{\times})^r$, is isomorphic to the $k$-points of a torus, as in Example \ref{torus when k algebraically closed}. Therefore, for ease of notation and for suggestiveness, let $T_E$ be a (split) torus so that $T_E(k) = H^{n-1}(\mathcal{D}(E),k^{\times})$.

\medskip
We may compose the map $G_E(k) \longrightarrow H^{n-1}_{\cdh}(E,\mathbb{G}_m)$ with the edge map $H^{n-1}_{\cdh}(E,\mathbb{G}_m) \longrightarrow KH_{2-n}(E)$, coming from the descent spectral sequence, to get a map $G_E(k) \longrightarrow H^{n-1}_{\cdh}(E,\mathbb{G}_m) \longrightarrow KH_{2-n}(E)$, which we will denote $\alpha$; we can write $\coker(\alpha)$ in the following short exact sequence:

\begin{equation} \label{cokeralpha, arbitrary n}
\xymatrix{
0 \ar[r] & \coker(\NS) \ar[r] & \coker(\alpha) \ar[r] & H^{n-2}(\mathcal{D}(E),\Z) \ar[r] & 0.
}
\end{equation}

\medskip
Similarly, $\ker(\alpha)$ can also be given by a short exact sequence:

\begin{equation} \label{keralpha, arbitrary n}
\xymatrix{
0 \ar[r] & \ker(\beta) \ar[r] & \ker(\alpha) \ar[r] & \ar[r] \im(d_2^{n-3,0}) & 0,
}
\end{equation}

\medskip
\noindent where $d_2^{n-3,0}$ is the $E_2$ differential $H_{\cdh}^{n-3}(E,\Z) \longrightarrow H^{n-1}_{\cdh}(E,\mathbb{G}_m)$ in the descent spectral sequence \eqref{eq:cdh-descent spectral sequence for KH}. Since the N\'{e}ron-Severi groups are finitely generated, $\ker(\NS)$ is finitely generated, so the quotient $\ker(\beta)$ is finitely generated as well. Furthermore, Lemma \ref{H^i(E,Z) finitely generated} tells us that the term $\im(d_2^{n-3,0})$ is also finitely generated, so $\ker(\alpha)$ is also finitely generated.

\medskip When $n = 3$, Lemma \ref{d_2^{0,0} is the zero map} implies that the edge map $H^2_{\cdh}(E,\mathbb{G}_m) \longrightarrow KH_{-1}(E)$ is an injection (see \eqref{eq: kh_ses, n = 3}), so $\ker(\alpha) = \ker(\beta)$. In particular, the sequence

\begin{equation} \label{}
\xymatrix{
\ker(\NS) \ar[r] & G_E(k) \ar[r] & KH_{-2}(X)
}
\end{equation}

\medskip
\noindent is exact. All in all, for general $n$, both $\ker(\alpha)$ and $\coker(\alpha)$ are finitely generated, so that $KH_{1-n}(X)$ is isomorphic to the $k$ points of some group scheme, up to some finitely generated groups.

\medskip
\noindent The rest of this section will be dedicated to showing the following.

\begin{prop}
The diagrams \eqref{M'_E(k)} and \eqref{M_E(k)} are isomorphic to the $k$-points of $1$-motives $M'_E$, $M_E$, respectively, over $k$.
\end{prop}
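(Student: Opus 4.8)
The plan is to realize each of the two diagrams as the $k$-points of a $1$-motive by exhibiting, term by term, algebraic group schemes over $k$ whose $k$-points give the groups appearing in the diagram, and then checking that the maps between them are morphisms of group schemes. Recall that a $1$-motive over $k$ is a two-term complex $[L \to G]$ with $L$ a lattice (i.e. a finitely generated free abelian group, viewed as an \'etale group scheme) and $G$ a semiabelian variety, together with a homomorphism $L \to G$. So for each diagram I must produce (a) a semiabelian variety $G_E$ sitting in an extension $0 \to T_E \to G_E \to \coker(\sPic^0) \to 0$ whose $k$-points recover the middle row, and (b) a lattice mapping to it whose $k$-points recover the top group, compatibly with the vertical map into $G_E(k)$.

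\textbf{Step 1: the semiabelian variety.} We have already observed that $\coker(\Pic^0)$ is the $k$-points of the abelian variety $A_E := \coker(\sPic^0)$ (exactness of $k$-points on representable $\Pic^0$'s, $k$ algebraically closed), and that $T_E(k) = H^{n-1}(\mathcal{D}(E),k^\times)$ for a split torus $T_E$ (Example \ref{torus when k algebraically closed}). It remains to see that the extension of $\coker(\Pic^0)$ by $T_E(k)$ defining $G_E(k)$ in \eqref{diagram:pullback to get TGA} is induced from an extension of $A_E$ by $T_E$ in the category of group schemes. Extensions of an abelian variety by a torus are classified by $\Hom(X^*(T_E), \sPic^0(A_E)) = \Hom(X^*(T_E), A_E^\vee)$, i.e. by points of the dual abelian variety; so the task is to check that the class of our extension of abstract groups lands in this subgroup of $\Ext^1_{\Z}(\coker(\Pic^0), T_E(k))$. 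The cleanest way is to trace through where $G_E(k)$ came from: it is the pullback of $H^{n-1}_{\cdh}(E,\mathbb{G}_m)$, which itself is an extension of $\coker(\Pic) \subseteq \Pic(\Delta_{n-2}^{alt}E)/(\text{im})$ built out of Picard groups of projective varieties and the cellular cohomology of $\mathcal{D}(E)$; since all of the input data --- the semisimplicial scheme $\Delta_\bullet^{alt}E$, its relative Picard functor, the face maps --- are defined over $k$, the extension is pulled back from a Yoneda extension of the corresponding group schemes, hence is algebraic. Concretely, one identifies $G_E$ with (a component-group correction of) the relative Picard scheme $\sPic(\Delta_\bullet^{alt}E / \mathcal{D}(E))$, or builds it by hand as the pullback in the category of fppf sheaves of the algebraic analogue of \eqref{diagram:pullback to get TGA}, and then invokes that an extension of an abelian variety by a torus, as fppf sheaves, is automatically representable by a semiabelian variety.

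\textbf{Step 2: the lattice.} For $M'_E$ the lattice is $\ker(\NS)$, the kernel of $\NS(\Delta_{n-3}^{alt}E) \to \NS(\Delta_{n-2}^{alt}E)$; for $M_E$ it is $H^{n-3}(\NS(\Delta_\bullet^{alt}E))$, a subquotient of the same. N\'eron--Severi groups of projective varieties are finitely generated, and for a projective variety over an algebraically closed field of characteristic zero they are torsion-free (the $\Z$-module $\NS$ of a smooth projective variety is torsion-free since $\Pic/\Pic^0 \hookrightarrow H^2_{\et}$ modulo torsion; in any case, replacing $\ker(\NS)$ by $\ker(\NS)/(\text{torsion})$ only changes $K$-groups by a finitely generated group and is harmless, but in fact the disjoint-union-of-smooth-components structure of $\Delta_p^{alt}E$ makes $\NS$ literally torsion-free). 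Hence $\ker(\NS)$ and $H^{n-3}(\NS(\Delta_\bullet^{alt}E))$ are finitely generated free abelian groups, i.e. lattices, and being defined purely in terms of the $k$-scheme $E$ they arise as the $k$-points (= geometric points) of \'etale group schemes over $k$. The vertical map $\ker(\NS) \to \coker(\Pic^0)$ (and hence, after pullback, $\ker(\NS) \to G_E(k)$) is induced by the algebraic maps $\sPic^0(\Delta_\bullet^{alt}E)$ and therefore is a morphism of group schemes; likewise for $H^{n-3}(\NS(\Delta_\bullet^{alt}E))$, using that \eqref{map giving rise to M_E lattice} was shown to vanish so that the composite factors as claimed. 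Assembling Steps 1 and 2 and applying $k$-points gives exactly the diagrams \eqref{M'_E(k)} and \eqref{M_E(k)}, proving the proposition.

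\textbf{The main obstacle} I anticipate is Step 1: namely, verifying that the extension class of $G_E$ is genuinely algebraic --- that we have an extension of the abelian variety $A_E$ by the torus $T_E$ in the category of fppf (or \'etale) sheaves, not merely an extension of their abstract groups of $k$-points. The subtlety is that $k$-points is exact here, so an extension of $k$-points does not a priori come from an extension of sheaves; one must produce the extension at the level of functors before taking points. I would handle this by carrying out the entire construction of \S\ref{sec:H^(n-1)(E,G_m)}--\S\ref{sec:1-motive} --- the flasque resolution of $\mathbb{G}_m$, the semisimplicial spectral sequence, the Moving Lemma, the pullback \eqref{diagram:pullback to get TGA} --- in the category of big fppf sheaves on $Sch/k$ rather than for abstract groups, noting that every object in sight ($\mathbb{G}_m$, $\sPic^0$, $\NS$, the cellular complex of the $k$-scheme $\mathcal{D}(E)$ with $\mathbb{G}_m$-coefficients, which computes a constant-by-torus sheaf) is representable or at least a sheaf, and that the relevant cohomology sheaves are representable because $\Delta_p^{alt}E$ is projective; the representability of the resulting extension of $A_E$ by $T_E$ is then automatic. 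The remaining verifications --- finite generation and freeness of the lattices, and that all horizontal and vertical maps are group-scheme morphisms --- are routine once this is in place.
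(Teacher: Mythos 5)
Your overall plan (exhibit, term by term, group schemes over $k$ whose points recover the diagrams, then check the maps are algebraic) is reasonable, and you correctly isolate the crux: the extension defining $G_E(k)$ must be shown to come from an extension of the abelian variety $\coker(\sPic^0)$ by the torus $T_E$ in the category of sheaves/group schemes, not merely of abstract groups. But that is exactly where your argument has a genuine gap. You dispose of it by asserting that rerunning the construction of sections \ref{sec:H^(n-1)(E,G_m)}--\ref{sec:1-motive} for big fppf sheaves makes representability ``automatic'' because the $\Delta_p^{alt}E$ are projective, and by defining $G_E$ by fiat as ``the algebraic pullback'' of an algebraic avatar of the bottom row of \eqref{diagram:pullback to get TGA}; yet the existence of that algebraic avatar (a representable identity-component piece of the functor computing $H^{n-1}_{\cdh}(E,\mathbb{G}_m)$, sitting as a group-scheme extension over $\coker(\sPic)$) is precisely the nontrivial content to be proved, not a formal consequence of everything being defined over $k$. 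The paper does not argue this from scratch: it identifies \eqref{M'_E(k)} and \eqref{M_E(k)} with the Picard $1$-motives $M'_{n-1}(A_\bullet)$ and $M_{n-1}(A_\bullet)$ of the complex $A_\bullet = C_\bullet(\Delta_\bullet^{alt}E)$ constructed in \cite{B-VRS03}, where the representability of the relevant functors ($P_{\ge n-2}(A_\bullet)$ and its identity component, cf. \cite[Lemma 3.3]{B-VRS03}) is already established, and then verifies term-by-term agreement. Moreover that verification is not formal: in the cited construction the semiabelian part arises as the cokernel of $\Pic^0(\Delta_{n-3}^{alt}E) \longrightarrow P_{\ge n-2}(A_\bullet)^0$, and to see that this cokernel is again an extension of $\coker(\Pic^0)$ by $T_E$ one must show (as in diagram \eqref{diagram:B-VRS03 group scheme analysis}) that the part of the image meeting the torus is a \emph{finite} subgroup (properness of the abelian-variety source versus affineness of $T_E$) and that the quotient of $T_E$ by this finite multiplicative-type subgroup is again a torus isomorphic to $T_E$. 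Your proposal never confronts this step, and without it the matching of the algebraic object with the abstract extension $G_E(k)$ in \eqref{diagram:pullback to get TGA} is not established.

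A secondary, smaller error: your parenthetical claim that the smooth projective components of $\Delta_p^{alt}E$ have torsion-free N\'eron--Severi groups is false in general (an Enriques surface has $\Z/2$ in $\NS$), so $\ker(\NS)$ and $H^{n-3}(\NS(\Delta_{\bullet}^{alt}E))$ may fail to be lattices in the strict torsion-free sense. This is not fatal — the framework of \cite{B-VRS03} allows torsion, and the paper only applies Deligne's equivalence to the free part $(M_E)_{\mathrm{fr}}$ — but the freeness argument you give should not be relied on, and if you insist on torsion-free lattices you must explicitly pass to free parts rather than claim $\NS$ is torsion-free.
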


\begin{proof}
To the semisimplicial scheme $\Delta_{\bullet}^{alt}E$, we may associate a complex $C_{\bullet}(\Delta_{\bullet}^{alt}E)$ of schemes, following \cite[Sec. 2]{B-VRS03}. Make $Sch/k$ into an additive category by modifying the morphisms to be formal $\Z$-linear combinations of actual $k$-scheme morphisms, and then construct $C_{\bullet}(\Delta_{\bullet}^{alt}E)$ in the usual way, by taking the differentials to be alternating sums of face maps.

\medskip
For ease of notation, we will write $A_{\bullet} \colon\!\!\!\!= C_{\bullet}(\Delta_{\bullet}^{alt}E)$. We now check that our construction agrees with \cite{B-VRS03}. As it would be redundant to set up our own notation, we will merely follow theirs. We apply their construction to $X_{\bullet} = A_{\bullet}$. In addition, $\Delta_{\bullet}^{alt}E$ is already projective, so there is no need to take a compactification. So we have

\begin{equation}
\begin{split}
\leftexp{o}{W'^0(A_{\bullet})} & = A_{\bullet} \\
\leftexp{o}{W'^1(A_{\bullet})} & = E_n \longrightarrow E_{n-1} \longrightarrow \cdots \longrightarrow E_2 \longrightarrow E_1 \\
& \,\,\, \vdots  \\
\leftexp{o}{W'^{n-1}(A_{\bullet})} & = E_n \longrightarrow E_{n-1} \\
\leftexp{o}{W'^n(A_{\bullet})} & = E_n\\
\leftexp{o}{W'^{n+1}(A_{\bullet})} & = \varnothing
\label{}
\end{split}
\end{equation}

\noindent and

\begin{equation}
\begin{split}
\leftexp{o}{W''^{-1}(A_{\bullet})} & = A_{\bullet} \\
\leftexp{o}{W''^0(A_{\bullet})}    & = A_{\bullet} \\
\leftexp{o}{W''^1(A_{\bullet})}    & = \varnothing
\label{}
\end{split}
\end{equation}

\noindent so that $W$, the convolution of $W'$ and $W''$, is given by

\begin{equation}
\begin{split}
\leftexp{o}{W''^{-1}(A_{\bullet})} & = \Delta_{\bullet}^{alt}E \\
\leftexp{o}{W''^0(A_{\bullet})}    & = \Delta_{\bullet}^{alt}E \\
\leftexp{o}{W'^1(A_{\bullet})} & = E_n \longrightarrow E_{n-1} \longrightarrow \cdots \longrightarrow E_2 \longrightarrow E_1 \\
& \,\,\, \vdots \\
\leftexp{o}{W'^{n-1}(A_{\bullet})} & = E_n \longrightarrow E_{n-1} \\
\leftexp{o}{W'^n(A_{\bullet})} & = E_n\\
\leftexp{o}{W'^{n+1}(A_{\bullet})} & = \varnothing
\label{}
\end{split}
\end{equation}

\noindent where the chain maps are the alternating sum of the face maps. In addition, in any explicitly written-out complexes, the leftmost term has homological degree zero. Then the spectral sequence \cite[3.1.3]{B-VRS03} with $r = 0$ is

\begin{equation}
E_1^{p,q} = H^q(\Delta_p^{alt}E,\mathbb{G}_m) \Longrightarrow H^{p+q}(\mathcal{K}'),
\label{B-VRSss}
\end{equation}

\noindent which is the spectral sequence \eqref{eq:cdh semisimplicial spectral sequence applied to K_m}. Next, we claim that the $1$-motives $M_{n-1}'(A_{\bullet}) = [\Gamma'_{n-1}(A_{\bullet}) \longrightarrow G_{n-1}(A_{\bullet})]$ and $M_{n-1}'(A_{\bullet}) = [\Gamma'_{n-1}(A_{\bullet}) \longrightarrow G_{n-1}(A_{\bullet})]$ are the $1$-motives $M'_E$ and $M_E$ referred to earlier, where

\begin{equation}
\begin{split}
\Gamma_{n-1}'(A_{\bullet}) & = \ker\left(\partial \colon P_{n-3}(A_{\bullet})/P_{n-3}(A_{\bullet})^0 \longrightarrow P_{\ge n-2}(A_{\bullet})/P_{\ge n-2}(A_{\bullet})^0\right) \\
\Gamma_{n-1}(A_{\bullet}) & = \coker\left(\NS(\Delta_{n-4}^{alt}E) \longrightarrow \Gamma_{n-1}'(A_{\bullet})\right)
\end{split}
\label{defn of gamma lattices}
\end{equation}

\medskip
\noindent and

\begin{equation}
G_{n-1}(A_{\bullet}) = \coker\left(\partial \colon P_{n-3}(A_{\bullet}) \longrightarrow P_{\ge n-2}(A_{\bullet})\right),
\label{}
\end{equation}

\medskip
\noindent where

\begin{equation}
\begin{split}
P_{\ge i}(A_{\bullet}) & = H^{i+1}(W^i\mathcal{K}') \\
P_i(A_{\bullet}) & = H^{i+1}(Gr_W^i\mathcal{K}').
\label{}
\end{split}
\end{equation}

\medskip
\noindent and $\mathcal{K}'$ is as in spectral sequence \eqref{B-VRSss}. To check this, we first compute the lattices $\Gamma'_{n-1}(A_{\bullet})$ and $\Gamma_{n-1}(A_{\bullet})$. We can see that $P_{n-3}(A_{\bullet}) = \Pic(\Delta_{n-3}^{alt}E)$; we still need to determine $P_{\ge n-2}(A_{\bullet})$. For the latter, the above spectral sequence \eqref{B-VRSss} gives a short exact sequence

\begin{equation}
\xymatrix{
0 \ar[r] & H^{n-1}(\mathcal{D}(E),k^{\times}) \ar[r] & P_{\ge n-2}(A_{\bullet}) \ar[r] & \Pic(\Delta_{n-2}^{alt}E) \ar[r] & 0.
}
\label{Pge1ses}
\end{equation}

\medskip
\noindent Consider the pullback of the diagram along the inclusion $\Pic^0(\Delta_{n-2}^{alt}E) \longrightarrow \Pic(\Delta_{n-2}^{alt}E)$. The pullback of this square is $P_{\ge n-2}(A_{\bullet})^0$ \cite[Lemma 3.3]{B-VRS03}. We indicate this in the diagram below.

\begin{equation}
\vcenter{\xymatrix{
0 \ar[r] & H^{n-1}(\mathcal{D}(E),k^{\times}) \ar[r] \ar@{=}[d] & P_{\ge n-2}(A_{\bullet})^0 \ar[r] \ar[d] & \Pic^0(\Delta_{n-2}^{alt}E) \ar[r] \ar[d] & 0 \\
0 \ar[r] & H^{n-1}(\mathcal{D}(E),k^{\times}) \ar[r]            & P_{\ge n-2}(A_{\bullet}) \ar[r]          & \Pic(\Delta_{n-2}^{alt}E) \ar[r]          & 0
}}
\label{}
\end{equation}

\medskip
\noindent Applying the snake lemma to the above diagram, we obtain

\begin{equation}
P_{\ge n-2}(A_{\bullet})/P_{\ge n-2}(A_{\bullet})^0 \cong \NS(\Delta_{n-2}^{alt}E),
\label{}
\end{equation}

\medskip
\noindent so that $\Gamma_{n-1}'(A_{\bullet}) = \ker(\NS(\Delta_{n-3}^{alt}E) \longrightarrow \NS(\Delta_{n-2}^{alt}E))$, which agrees with our lattice $L_E = \ker(\NS)$. The lattice $\Gamma_{n-1}(A_{\bullet})$ is just the cokernel

\begin{equation}
\begin{split}
\Gamma_{n-1}(A_{\bullet}) & = \coker(\NS(\Delta_{n-4}^{alt}E) \longrightarrow \Gamma_{n-1}'(A_{\bullet})) \\
                          & = H^{n-3}(\NS(A_{\bullet}))
\end{split}
\label{}
\end{equation}

\medskip
\noindent which agrees with our other lattice term in \eqref{M_E(k)}. It remains to check that the semiabelian variety $G_{n-1}(A_{\bullet})$ agrees with our $G_E$. Using the short exact sequence above that calculates $P_{\ge n-2}(A_{\bullet})$, we get

\begin{equation}
\vcenter{\xymatrix{
 & & \Pic^0(\Delta_{n-3}^{alt}E) \ar[d]^-g \\
0 \ar[r] & T_E \ar[r] & P_{\ge n-2}(A_{\bullet})^0 \ar[r] & \Pic^0(\Delta_{n-2}^{alt}E) \ar[r] & 0
}}
\label{1-motive before modding out}
\end{equation}

\medskip
\noindent where $G_{n-1}(A_{\bullet})$ is the cokernel of the vertical map $g$. We take the pullback of the first horizontal map $T_E \longrightarrow P_{\ge n-2}(A_{\bullet})^0$ along $g$.

\begin{equation} \label{diagram:B-VRS03 group scheme analysis}
\vcenter{\xymatrix{
& & & 0 \ar[d] \\
0 \ar[r] & W \ar[r] \ar[d]^-f & \Pic^0(\Delta_{n-3}^{alt}E) \ar[d]^-g \ar[r] & \Pic^0(\Delta_{n-3}^{alt}E)/W \ar[r] \ar[d]^-h & 0  \\
0 \ar[r] & T_E \ar[r] \ar[d] & P_{\ge n-2}(A_{\bullet})^0 \ar[r] \ar[d] & \Pic^0(\Delta_{n-2}^{alt}E) \ar[r] \ar[d] & 0 \\
 & \coker f \ar[r] & G_{n-1}(A_{\bullet}) \ar[r] & \coker h \ar[r] & 0
}}
\end{equation}

\medskip
\noindent Since $T_E$ is a closed subgroup of $P_{\ge n-2}(A_\bullet)^0$, we see that $W$ is a closed subgroup of $\Pic^0(\Delta_{n-3}^{alt}E)$. Furthermore, because the square is Cartesian, the induced map on cokernels is injective. We add these observations to the diagram \eqref{1-motive before modding out}. To finish, we need the following lemma:

\medskip
\begin{lem}
The $k$-points of the bottom row of \eqref{diagram:B-VRS03 group scheme analysis} isomorphic to the short exact sequence in the top row of the diagram \eqref{diagram:pullback to get TGA}.
\end{lem}

\medskip
\begin{proof}
\noindent Let $W' = \im f$ denote the image of $W$ in $T_E$. Since $\Pic^0(\Delta_{n-3}^{alt}E)$ is proper over $k$, so $g$ is also proper. We have already observed that $W$ is proper over $k$ as well. Furthermore, the map $W \longrightarrow T_E$ is also proper, so $W'$ is a closed subvariety of $T_E$ that is proper over $k$ \cite[II, Exercise 4.4]{Har77}. On the other hand, $T_E$ is affine, and $W'$, being closed in $T_E$, is also affine. But then $W'$ is finite over $k$, as it is affine and proper over $k$\cite[II, Exercise 4.6]{Har77}.

\medskip
\noindent In addition, since $W'$ is a finite subgroup of $T_E$, we claim that $\coker f$ is isomorphic to $T_E$. $T_E$ is a group of multiplicative type, and since all finite subgroups of a group of multiplicative type are also of multiplicative type, $W'$ is of multiplicative type \cite[2.2]{Wat79}. There is an anti-equivalence between group schemes of multiplicative type over $k$ and finite abelian groups \cite[Proposition 20.17]{BOI}. Here, the map $W' \longrightarrow T_E$ corresponds to a surjective map of a lattice onto a finite abelian group. The kernel of this map must also be finitely generated free abelian of the same rank, so that $\coker f$ must be isomorphic to a copy of $T_E$.

\medskip
\noindent Finally, since the top right horizontal map is surjective, $\coker h$ is the same as the cokernel of the map $\Pic^0(\Delta_{n-3}^{alt}E) \longrightarrow \Pic^0(\Delta_{n-2}^{alt}E)$, as in our case.
\end{proof}

\medskip
\noindent Applying the snake lemma and making the identification $\coker f \cong T_E$ yields a short exact sequence of commutative group schemes

\begin{equation}
\xymatrix{
0 \ar[r] & T_E \ar[r] & G_{n-1}(A_{\bullet}) \ar[r] & \coker(\Pic^0) \ar[r] & 0,
}
\end{equation}

\noindent which agrees with our construction.
\end{proof}

\medskip
\noindent Now that we have established that $M_E$ is a $1$-motive, we are interested in how to calculate it. In the landmark paper \cite{Del74}, Deligne established an equivalence between torsion-free $1$-motives and torsion-free mixed Hodge structures of a given type; we state the version given in \cite[1.5]{B-VRS03}.

\begin{thm} \label{thm:deligne's equivalence}
Let $\mathcal{M}_1(\C)$ denote the category of $1$-motives over $\C$, and let $\text{MHS}_1$ denote the category of mixed Hodge structures of type $\{(0,0),(0,1),(1,0),(1,1)\}$, such that $Gr_1^WH$ is polarizable. Then we have an equivalence of categories

\begin{equation} \label{eq:MHS, 1-motive equivalence}
r_{\mathcal{H}} \colon \mathcal{M}_1 \longrightarrow \text{MHS}_1.
\end{equation}
\end{thm}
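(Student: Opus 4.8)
Deligne proves precisely this equivalence in \cite{Del74} (our normalization being the one recorded in \cite[1.5]{B-VRS03}), so the plan is to recall his Hodge realization functor $r_{\mathcal{H}}$, to write down a quasi-inverse $S$, and to reduce the comparison to the three ``pure'' building blocks --- lattices, abelian varieties and tori --- together with the standard description of $\Ext^1$ in the abelian category of mixed Hodge structures. To define $r_{\mathcal{H}}$ on objects, I would send a torsion-free $1$-motive $M = [L \xrightarrow{u} G]$, with $G$ an extension $0 \to T \to G \to A \to 0$ of an abelian variety $A$ by a torus $T$, to the finitely generated free abelian group
\begin{equation}
T_{\Z}(M) = L \times_{G(\C)} \Lie(G)
\end{equation}
formed along the exponential $\exp\colon \Lie(G) \to G(\C)$, which sits in $0 \to H_1(G,\Z) \to T_{\Z}(M) \to L \to 0$; I equip it with the weight filtration $W_{-2} = H_1(T,\Z)$, $W_{-1} = H_1(G,\Z)$, $W_0 = T_{\Z}(M)$, and with Deligne's Hodge filtration $F^0 = \ker\bigl(T_{\Z}(M)\otimes\C \to \Lie(G)\bigr)$, $F^{-1} = T_{\Z}(M)\otimes\C$. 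Checking the mixed Hodge structure axioms is then routine --- the weight-graded pieces being $\Z(1)^{\oplus\dim T}$, the polarizable structure $H_1(A,\Z)$, and the lattice $L$ --- and a Tate twist by $\Z(-1)$ lands the result in $\mathrm{MHS}_1$ with exactly the types and polarizability of the statement; functoriality in $M$ is immediate.

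Next I would build the quasi-inverse. Given $H$ in $\mathrm{MHS}_1$, I untwist so that its weight filtration $W_{-2}\subseteq W_{-1}\subseteq W_0 = H$ has graded pieces of weights $-2,-1,0$: the piece $W_{-2}$ is of type $(-1,-1)$, hence defines a torus $T$; the piece $W_{-1}/W_{-2}$ is polarizable of weight $-1$ and hence, by the classical equivalence, is $H_1(A,\Z)$ for a unique abelian variety $A$; and $L := W_0/W_{-1}$ is a lattice. I then take $G$ to be the semiabelian variety with $G(\C) = \Ext^1_{\mathrm{MHS}}(\Z,W_{-1})$: Carlson's formula computes this group, and the $\Ext^{\bullet}_{\mathrm{MHS}}(\Z,-)$ long exact sequence of $0 \to W_{-2}\to W_{-1}\to W_{-1}/W_{-2}\to 0$, together with $\Ext^{\ge 2}_{\mathrm{MHS}} = 0$, exhibits it as an extension of $A(\C)$ by $T(\C)$, so that $G$ is genuinely an extension of $A$ by $T$. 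Finally, splitting the underlying abelian groups of $0 \to W_{-1}\to H \to L \to 0$ and reading off the Hodge-theoretic extension classes produces a homomorphism $u\colon L \to \Ext^1_{\mathrm{MHS}}(\Z,W_{-1}) = G(\C)$, and I set $S(H) = [L \xrightarrow{u} G]$.

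It then remains to see that $r_{\mathcal{H}}$ and $S$ are mutually quasi-inverse. That $r_{\mathcal{H}}\circ S \cong \mathrm{id}$ and $S\circ r_{\mathcal{H}} \cong \mathrm{id}$ comes down to the fact that a torsion-free object of $\mathrm{MHS}_1$ (resp. a torsion-free $1$-motive) is reconstructed from its weight-graded pieces and the two successive extension classes (resp. from $L$, $T$, $A$, the extension class defining $G$, and $u$) --- exactly the data preserved by the two functors --- so this step is bookkeeping. Full faithfulness I would deduce by reduction to the pure pieces, using the classical facts that $\Hom$ of tori is $\Hom$ of their (twisted) character lattices, that $\Hom(A,A') = \Hom_{\mathrm{MHS}}\bigl(H_1(A,\Z),H_1(A',\Z)\bigr)$ (the Riemann description of morphisms of polarizable weight-$(-1)$ Hodge structures), that $\Hom$ of lattices is tautological, and that there are no nonzero morphisms between pieces of different weights; the compatibility of a morphism with the extension data is then a snake-lemma chase with the same $\Ext^1$ identifications.

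The hard part is the homological algebra of the category $\mathrm{MHS}$ that underlies everything above: Carlson's formula $\Ext^1_{\mathrm{MHS}}(\Z,H)\cong H_{\C}/(F^0 + H_{\Z})$ for torsion-free $H$ of negative weights, the vanishing $\Ext^{\ge 2}_{\mathrm{MHS}} = 0$ (so $\mathrm{MHS}$ has cohomological dimension one), and --- most delicately --- the check that under these identifications the abelian-group law on $\Ext^1$ agrees with the algebraic-group law on the $\C$-points of the semiabelian variety, and that Baer/Yoneda sums of extensions correspond to addition on $G$. Confining attention to torsion-free objects on both sides, as the statement does, is precisely what keeps these identifications clean; allowing torsion would force one into the heavier bookkeeping of $1$-motives with torsion.
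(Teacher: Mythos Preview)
The paper does not supply its own proof of this theorem; it is stated as a known result of Deligne \cite{Del74}, in the normalization recorded in \cite[1.5]{B-VRS03}, and used thereafter as a black box. Your sketch is a correct outline of Deligne's original construction --- the Hodge realization via the pullback lattice $T_{\Z}(M) = L \times_{G(\C)} \Lie(G)$, the quasi-inverse built from Carlson's $\Ext^1_{\mathrm{MHS}}$ formula, and the reduction of full faithfulness to the three pure pieces --- and goes well beyond what the paper itself provides. There is nothing substantive to compare: you have reproduced the standard argument, whereas the paper simply cites it.
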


\medskip
\noindent The main theorem \cite[Theorem 0.1]{B-VRS03} asserts that the free part $1$-motive $(M_E)_{\mathrm{fr}}$, after base extending to $\C$, corresponds to a unique mixed Hodge structure $H_E$ in $W_2H^{n-1}(E(\C),\Z)$. (More specifically, $H_E$ is the unique largest torsion-free mixed Hodge structure of type $\{(0,0),(0,1),(1,0),(1,1)\}$ in $W_2H^{n-1}(E(\C),\Z)$ such that $\Gr_1^W\!\!H_E$ is polarizable.) This gives us a concrete way of computing the free part of the $1$-motive $M_E$ arising from the computation of $H^{n-1}_{\cdh}(E,\mathbb{G}_m)$.

\section{Independence of the choice of resolution}
\label{sec:independence of resolution}

Now that we have constructed a $1$-motive $M_E = [L_E \longrightarrow G_E]$, we wish to determine to what extent it is independent of the choice of resolution. Under Deligne's equivalence of $1$-motives and mixed Hodge structures, we get another $1$-motive, which we denote $M = [L \longrightarrow G]$, that comes from a unique mixed Hodge structure $H$ in $W_2H^n(X(\C),\Z)$, of the considered type. We will not only establish to what extent $M_E$ is independent of the choice of resolution, but also we will establish a relation between $M_E$ and $M$. The precise statement is given below.

\begin{prop} \label{1-motive independence of resolution}
For each resolution $p \colon \tilde{X} \longrightarrow X$, there exists a morphism $M_E \longrightarrow M$ which is an isomorphism unless $n = 3$, in which case we have an isomorphism on the non-lattice parts and a surjection on the lattices.
\end{prop}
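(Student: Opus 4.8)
The plan is to realize $M_E$ and $M$ as arising from the same machinery of \cite{B-VRS03}, but applied to comparable geometric inputs, and then to track how a dominating resolution relates the two. Concretely, given a resolution $p \colon \tilde{X} \longrightarrow X$ with exceptional divisor $E$, the main theorem of \cite{B-VRS03} identifies (the free part of) $M_E$ with the $1$-motive attached to $W_2 H^{n-1}(E(\C),\Z)$, while $M$ is by definition the $1$-motive attached to $W_2 H^n(X(\C),\Z)$. So the first step is to produce a canonical map between these two mixed Hodge structures. This comes from the long exact sequence in cohomology associated to the abstract blow-up square $(E \hookrightarrow \tilde{X}, Z \hookrightarrow X)$, i.e. the $\mathrm{cdh}$-descent (Mayer--Vietoris) sequence, which in cohomology reads $\cdots \to H^{i}(X) \to H^i(\tilde X) \oplus H^i(Z) \to H^i(E) \to H^{i+1}(X) \to \cdots$. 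Since $\tilde X$ is smooth, $H^{n-1}(\tilde X)$ and $H^n(\tilde X)$ carry pure Hodge structures of the appropriate weights; and our hypothesis on $Z$ (smooth, or $\codim Z > 2$) forces the relevant pieces of $H^*(Z)$ to be of high weight or to vanish in the range $\le n-1$. The upshot is a connecting morphism $W_2 H^{n-1}(E) \to W_2 H^n(X)$ of mixed Hodge structures, which, after applying Deligne's equivalence (Theorem \ref{thm:deligne's equivalence}), gives the desired morphism $M_E \to M$ of $1$-motives.

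The second step is to analyze the kernel and cokernel of this connecting map using the rest of the Mayer--Vietoris sequence, truncated by $W_2$. On the left, $W_2 H^{n-1}(E)$ receives a map from $W_2\big(H^{n-1}(\tilde X) \oplus H^{n-1}(Z)\big)$, whose image one must identify; on the right, the cokernel of $W_2 H^{n-1}(E) \to W_2 H^n(X)$ injects into $W_2 H^n(\tilde X) \oplus W_2 H^n(Z)$. For $n > 3$: $H^{n-1}(\tilde X)$ is pure of weight $n-1 > 2$, so $W_2 H^{n-1}(\tilde X) = 0$, and (under the hypotheses on $Z$, using $\dim Z \le n-2$ and the smoothness case) the $Z$-contributions in the relevant weights also drop out; similarly $W_2 H^n(\tilde X) = 0$ and $W_2 H^n(Z) = 0$ since $\dim Z < n$ so $H^n(Z)$ vanishes or is irrelevant. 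Hence for $n > 3$ the connecting map is an isomorphism of mixed Hodge structures, and therefore $M_E \to M$ is an isomorphism of $1$-motives. For $n = 3$: now $H^{n-1}(\tilde X) = H^2(\tilde X)$ is pure of weight $2$, so $W_2 H^2(\tilde X) = H^2(\tilde X)$ is \emph{nonzero} and contributes; this extra term lands in the lattice (type $(1,1)$) part of the story, which is exactly why the lattice map fails to be injective while the semiabelian part is untouched. One checks via the weight-filtered exact sequence that the cokernel on the right (feeding into $W_2 H^3(X)$, with $W_2 H^3(\tilde X) = 0$ since $H^3(\tilde X)$ is pure of weight $3$, and $H^3(Z) = 0$ as $\dim Z \le 1$) still vanishes, so $M_E \to M$ remains surjective, and the only discrepancy is the kernel coming from $W_2 H^2(\tilde X)$, contributing to a surjection on lattices with finitely generated (in fact: the relevant $\NS$-type) kernel.

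Alternatively — and this is probably the cleaner route to actually write down — one stays on the $K$-theoretic side and argues purely in terms of the spectral sequence machinery already set up: one takes two excellent resolutions and a common dominating one, uses functoriality of the $\mathrm{cdh}$-descent spectral sequence and of the $1$-motive construction of \cite{B-VRS03} under the induced maps of semisimplicial schemes $\Delta_\bullet^{alt} E' \to \Delta_\bullet^{alt} E$, and compares. The key input is that the homotopy type $\mathcal{DR}(X)$ of the dual complex is a resolution invariant \cite{Ste06} (already used in Lemma \ref{H^i(E,Z) finitely generated}), so the torus part $T_E$ and the lattice contributions of cohomology-of-$\mathcal{D}(E)$ type are automatically invariant; what requires work is the abelian-variety part $\coker(\sPic^0)$ and the map of lattices from $\NS$. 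Here the weight argument above re-enters disguised as: the $\Pic^0$-contributions are birational invariants except for the Néron--Severi classes of the extra components/blowups that appear when $n=3$ makes $E$ genuinely $2$-dimensional.

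The main obstacle I expect is the $n = 3$ bookkeeping: precisely pinning down that the failure of injectivity on lattices is \emph{exactly} a surjection (not merely "has finitely generated kernel and cokernel"), which requires showing the relevant connecting map into the $G$-part is zero and the map on lattices is genuinely onto. This is where Lemma \ref{d_2^{0,0} is the zero map} (the vanishing of $d_2^{0,0}$ for $n=3$) must be invoked to control the semiabelian part, and where one must carefully separate the weight-$2$ piece $H^2(\tilde X)$ coming from the smooth surface $E$ (for $n=3$) from everything else. For $n > 3$ the argument is essentially a formal consequence of the weights being too large, so that part should go quickly.
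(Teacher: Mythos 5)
Your first two paragraphs are essentially the paper's own proof: take the long exact sequence of the blow-up square in singular cohomology, use functoriality of weights to get a morphism of mixed Hodge structures $W_2H^{n-1}(E,\Z) \to W_2H^n(X,\Z)$, and observe that purity of $H^i(\tilde{X})$ (weight $i$) and the vanishing of the relevant $H^i(Z)$ force an isomorphism on $W_1$ and on $W_2$ for $n>3$, but only a surjection on $W_2$ (the lattice part) when $n=3$ because $H^2(\tilde{X})$ is pure of weight $2$; Deligne's equivalence then yields $M_E \to M$ with the stated properties. The alternative $K$-theoretic route and the invocation of Lemma \ref{d_2^{0,0} is the zero map} you anticipate are not needed — the weight argument alone suffices, exactly as you carried it out.
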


\begin{proof}
Taking the long exact sequence in singular cohomology (of the $\C$-points) induced by the blowup square resolving the singularities of $X$ via $p$, we obtain

\begin{equation} \label{singular cohomology les}
\xymatrix{
\cdots \ar[r] & H^{r-1}(\tilde{X},\Z) \oplus H^{r-1}(Z,\Z) \ar[r] & H^{r-1}(E,\Z) \ar[r] & H^r(X,\Z) \ar[r] & \cdots
}
\end{equation}

\noindent From this long exact sequence, we get a map $H_E \longrightarrow H$ of mixed Hodge structures, since the weights are functorial with respect to morphisms. Since the groups $H^i(Z,\Z)$ vanish for $i > n-2$ and the groups $H^i(\tilde{X},\Z)$ are pure of weight $i$, and $n \ge 3$, taking the weight $2$ part of the sequence yields an isomorphism $W_2H^{n-1}(E,\Z) \cong W_2H^n(X,\Z)$ unless $n = 3$, in which case we only have a surjection. Similarly, taking the weight $1$ part of the above sequence yields an isomorphism $W_1H^{n-1}(E,\Z) \cong W_1H^n(X,\Z)$. The weight $2$ part contains the lattice, and the weight $1$ part contains the rest of the $1$-motive, proving the claim.
\end{proof}

\begin{rem}
Because the map $L_E \longrightarrow G$ factors through $L$, we see from the composite

\begin{equation}
\xymatrix{
L_E(k) \ar@{>>}[r] & L(k) \ar[r] & G(k) \ar[r] & KH_{1-n}(E)
}
\label{}
\end{equation}

\medskip
\noindent that the images of $L_E$ and $L$ in $G$ are the same. So when $n = 3$, the sequence

\begin{equation}
\xymatrix{
L(k) \ar[r] & G(k) \ar[r] & KH_{-2}(E)
}
\label{eq:exactness_at_G(k)}
\end{equation}

\medskip
\noindent is still exact.
\end{rem}

\medskip
\noindent Another way to see that the torus $H^{n-1}(\mathcal{D}(E),k^{\times})$ is independent of the resolution is to see that the homotopy type of $\mathcal{D}(E)$ is independent of the choice of resolution \cite{Ste08}. So all of the cohomology groups $H^i(\mathcal{D}(E),\Z)$ (in particular, $i = n-3, n-2$ coming out of the exact sequence \eqref{eq:kh es, arbitrary n}) are independent of the choice of resolution, and thus $H^{n-1}_{\cdh}(E,\mathbb{G}_m)$ is independent of the choice of resolution as well. More directly, we can apply \emph{cdh}-descent to the cohomology groups themselves; we get a long exact sequence

\begin{equation}
\xymatrix{
\cdots \ar[r] & H_{\cdh}^{n-1}(Z,\mathbb{G}_m) \oplus H_{\cdh}^{n-1}(\tilde{X},\mathbb{G}_m) \ar[r] & H_{\cdh}^{n-1}(E,\mathbb{G}_m) \ar[r] & H_{\cdh}^n(X,\mathbb{G}_m) \ar[r] & \cdots
}
\label{}
\end{equation}

\medskip
\noindent Since $Z$ and $\tilde{X}$ are smooth, their \emph{cdh}-cohomology groups agree with their Zariski cohomology groups:

\begin{thm} \label{lem:cdh-Zar-cohomology-isomorphism}
Let $Y$ be smooth over $k$, and $\mathscr{F}$ a homotopy invariant sheaf with transfers on the \emph{cdh}-site over $X$. Then the change of topology morphism induces an isomorphism $H^p_{\cdh}(X,\mathscr{F}) \cong H^p_{\Zar}(X,\mathscr{F})$.
\end{thm}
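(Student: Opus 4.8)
The plan is to deduce Theorem \ref{lem:cdh-Zar-cohomology-isomorphism} from Voevodsky's comparison of the $\cdh$-topology with the Nisnevich topology together with the Beilinson--Lichtenbaum/Gersten-type machinery for homotopy invariant sheaves with transfers. The key input is that for a homotopy invariant presheaf with transfers $F$, the associated Nisnevich sheaf $F_{\mathrm{Nis}}$ is strictly homotopy invariant, and its $\cdh$-sheafification $a_{\cdh}F$ has the same cohomology on smooth schemes: this is essentially \cite[Thm. 5.1, Cor. 5.2]{Voe00} (see also \cite[Prop. 3.4, 3.6]{VSF00}), which says the change-of-topology map $H^p_{\mathrm{Nis}}(Y, F_{\mathrm{Nis}}) \to H^p_{\cdh}(Y, a_{\cdh}F)$ is an isomorphism for $Y$ smooth over a perfect field. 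So the first step I would take is to record this statement in the form we need, i.e. for $\mathscr{F}$ a homotopy invariant sheaf with transfers on the $\cdh$-site (which by restriction is such a sheaf on the smooth Nisnevich site).

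Second, I would reduce from the Nisnevich topology to the Zariski topology. For a strictly homotopy invariant Nisnevich sheaf with transfers $\mathscr{F}$ over a perfect field, the Nisnevich and Zariski cohomology agree on smooth schemes: this follows from the Gersten resolution (\cite[Thm. 4.37]{Voe00} or the corresponding statement in \cite{VSF00}), which provides a flasque resolution of $\mathscr{F}$ by sheaves that compute cohomology identically in both topologies — concretely, the Cousin/Gersten complex is a resolution in both the Zariski and Nisnevich topologies and its terms are acyclic in both. Composing the two comparison isomorphisms yields $H^p_{\cdh}(Y,\mathscr{F}) \cong H^p_{\mathrm{Nis}}(Y,\mathscr{F}) \cong H^p_{\Zar}(Y,\mathscr{F})$ for $Y$ smooth, which is exactly the assertion. (Here one uses that $k$ has characteristic zero, hence is perfect, so the Gersten conjecture for such sheaves is available.)

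I would be careful about two bookkeeping points. First, the statement as written has a clash of letters — it introduces $Y$ smooth but then writes $X$ and the $\cdh$-site over $X$; I would simply phrase everything in terms of a fixed smooth $Y$ and its small sites, which is harmless since all the cited results are local on the base. Second, one must make sure the hypotheses of the cited theorems are met: homotopy invariance plus transfers plus $k$ perfect is exactly what Voevodsky assumes, and in our application $\mathscr{F} = a_{\cdh}K_q$ for $q \le 1$, which is homotopy invariant with transfers by \cite[Sec.~3.4]{Voe00} as already noted in the text, so nothing new is needed there.

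The main obstacle — really the only nontrivial point — is that the cleanest available reference (Voevodsky) is stated for sheaves in the triangulated category $\mathbf{DM}$ or for presheaves with transfers on $\mathit{Sm}/k$, whereas we want a statement about a sheaf given on the $\cdh$-site of a possibly singular base. The resolution is that $\cdh$-cohomology of any $\cdh$-sheaf restricted to a smooth scheme only sees the smooth-scheme part of the site (every $\cdh$-cover of a smooth scheme is refined by smooth schemes, and the comparison is computed via hypercovers by smooth schemes), so one genuinely is reduced to the smooth situation where Voevodsky's results apply verbatim; I would spell out this reduction in a sentence or two rather than reprove the comparison theorem.
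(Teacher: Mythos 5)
Your proposal is correct and matches the paper, whose entire proof is the citation \cite{VSF00}: the chain $H^p_{\cdh} \cong H^p_{\mathrm{Nis}} \cong H^p_{\Zar}$ for homotopy invariant sheaves with transfers over a perfect field is exactly the content of the cited Voevodsky results. You simply spell out the standard argument that the citation compresses, so there is no substantive difference in approach.
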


\begin{proof}
\cite{VSF00}
\end{proof}

\medskip
\noindent This result is quite useful, because on smooth schemes, all of the sheaves $a_{\Zar}K_n$ are homotopy invariant sheaves with transfers \cite[Section 3.4]{Voe00}. Furthermore, $H^i_{\Zar}(Y,\mathbb{G}_m) = 0$ whenever $Y$ is smooth over $k$ and $i > 1$, so we obtain an isomorphism $H_{\cdh}^{n-1}(E,\mathbb{G}_m) \cong H_{\cdh}^n(X,\mathbb{G}_m)$.

\medskip
\noindent Now that we know that $H_{\cdh}^{n-1}(E,\mathbb{G}_m)$ is independent of the choice of resolution, the exact sequence \eqref{H^{n-1}(E,G_m)SES} shows that the group $\coker(\Pic)$ is also independent of the choice of resolution. Furthermore, since $\coker(\Pic^0)$ was independent of the choice of resolution, the cokernel of $\coker(\Pic^0) \longrightarrow \coker(\Pic)$, which is $\coker(\NS)$, is also independent of the choice of resolution, as is the kernel of that map. In summary, all of the various groups appearing in the diagram \eqref{extension of NS by Pic^0 diagram} are independent of the choice of resolution except \emph{possibly} the group $\ker(\NS)$, and only in the case $n = 3$. We give an example to show that indeed this is the case, that $\ker(\NS)$ is not independent of the choice of resolution when $n = 3$.

\begin{eg}
Let $X$ be an integral $3$-fold $X$ with a smooth singular locus $Z$ of dimension $\le 1$. Suppose we have an excellent resolution $p \colon \tilde{X} \longrightarrow X$ with exceptional divisor $E$ that has at least two irreducible components $E_1, E_2$ that have a nonempty intersection $E_{12}$ (which, by assumption, must be a smooth curve). Let the other irreducible components of $E$ be $E_3, \ldots, E_m$. Take a closed point $x$ that lies in $E_{12}$ but does not lie in any of the other $E_i$. Blowing up along $x$, we obtain a diagram

\begin{equation}
\vcenter{\xymatrix{
\Bl_x\!E \ar[r] \ar[d] & \Bl_x\!\tilde{X} \ar[d] \\
E \ar[r] \ar[d] & \tilde{X} \ar[d] \\
Z \ar[r] & X
}}
\label{}
\end{equation}

\medskip
\noindent so that $\Bl_{x}\!\tilde{X} \longrightarrow X$ is also an excellent resolution. $\Bl_{x}\!E$ then has $m+1$ irreducible components: the two blown-up components $E'_1 = \Bl_x\!E_1$ and $E'_2 = \Bl_x\!E_2$; the ``untouched'' components $E_3, \ldots, E_m$; and a new component $E'$ that is the exceptional divisor of $\Bl_x\!\tilde{X}$. The relationships between the intersections of the various components are given below.

\begin{equation}
\begin{aligned}
E'_1 \cap E'_2 & = \Bl_x\!E_{12} \cong E_{12} \\
E'_i \cap E & = \mbox{exceptional divisor of } \Bl_x\!E_i, \longrightarrow E_i & i = 1, 2 \\
E_i \cap E' & = \varnothing & i > 2 \\
E_i \cap E'_j & \cong E_i \cap E_j & i > 2, j = 1, 2
\label{}
\end{aligned}
\end{equation}

\medskip
\noindent In general, for a smooth surface $S$ that contains a point $y$, we will have $\NS(\Bl_y\!S) = \NS(S) \oplus \Z$ \cite[V, Theorem 5.8]{Har77}, so that from the following diagram obtained from blowing up along $x$

\begin{equation}
\vcenter{\xymatrix{
\NS(\Delta_0^{alt}E) \ar[r] \ar[d] & \NS(\Delta_0^{alt}E)  \oplus \NS(E') \oplus \Z^2  \ar[d] \\
\NS(\Delta_1^{alt}E) \ar[r] & \NS(\Delta_1^{alt}E) \oplus \Z^2
}}
\label{}
\end{equation}

\medskip
\noindent we see that $\NS(\Delta_0^{alt}E)$ has changed by $\NS(E') \oplus \Z^2$ and that $\NS(\Delta_1^{alt}E)$ has changed by $\Z^2$. Since $E'$ is projective, $\NS(E')$ has rank at least $1$, so that $\ker(\NS)$ must become strictly bigger, and in particular depends on the choice of resolution of $X$.
\label{example of ker(NS) not independent of choice of resolution}
\end{eg}

\medskip
\noindent This makes sense, because by Proposition \ref{1-motive independence of resolution}, we have in general only a surjection $H_{n-3}(NS(\Delta_{\bullet}^{alt}E)) \longrightarrow L$ and not an isomorphism.

\begin{rem}
Proposition \ref{1-motive independence of resolution} tells us that when $X$ is projective, the $1$-motive $M_E$ is independent of the choice of good resolution $p$ unless $n = 3$, in which case the non-lattice parts of the $1$-motive are independent of the choice of good resolution $p$. Therefore, to calculate $KH_{1-n}(X)$ when $X$ is not projective, we need only take an algebraic compactification $\overline{X}$ of $X$, smooth along the boundary, and \emph{then} compute $KH_{1-n}(\overline{X})$, as $KH_{1-n}(X) \cong KH_{2-n}(E) \cong KH_{1-n}(\overline{X})$. This shows that $KH_{1-n}(X)$ is independent of the choice of algebraic compactification $\overline{X}$. This result makes sense in light of the observation that negative $KH$ vanishes for smooth schemes, and we compactify away from the singular locus. In some sense, we are computing, $KH_{1-n}$ of the singularity $x_0$ locally sitting inside $X$.
\end{rem}

\medskip
\noindent We wrap things up by putting together everything we have proven so far.

\begin{thm}[Main Theorem for $KH_{1-n}(X)$] \label{thm:main theorem for KH_{1-n}(X)}
Let $X$ be an normal, integral $n$-fold over an algebraically closed field $k$ of characteristic zero, with singular locus $Z = \Sing(X)$ such that $Z$ is smooth or $\codim Z > 2$. Then there exists a $1$-motive

\begin{equation} \label{}
M = \left[\vcenter{ \xymatrix{& & L \ar[d] \\
0 \ar[r] & T \ar[r] & G \ar[r] & A \ar[r] & 0
}}\right]
\end{equation}

\medskip
\noindent and a map $\alpha \colon G(k)\longrightarrow KH_{1-n}(X)$, natural in $X$, whose kernel and cokernel are finitely generated. If $p \colon \tilde{X} \longrightarrow X$ is any good resolution of singularities, then $\ker(\alpha)$ and $\coker(\alpha)$ have the more explicit descriptions \eqref{keralpha, arbitrary n} and \eqref{cokeralpha, arbitrary n}, respectively. In particular, the descriptions of $\ker(\alpha)$ and $\ker(\beta)$ are independent of the choice of resolution of $X$.

\medskip
\noindent Furthermore, if $X \longrightarrow \overline{X}$ is an algebraic compactification of $X$, then after base extending to $\C$, the (torsion-free) $1$-motive $(M_{\C})_{\mathrm{fr}}$ corresponds, under the equivalence \eqref{eq:MHS, 1-motive equivalence}, to the unique largest torsion-free mixed Hodge structure $H$ of type $\{(0,0),(0,1),(1,0),(1,1)\}$ in $W_2H^n(\overline{X}(\C),\Z)$ such that $\Gr_1^W\!\!H$ is polarizable. Moreover, the non-lattice parts of $M$, and hence the map $\alpha$, are independent of the choice of algebraic compactification $X \longrightarrow \overline{X}$.

\medskip
\noindent Finally, when $n = 3$, then we have the additional property that the sequence \eqref{eq:exactness_at_G(k)} is exact.
\end{thm}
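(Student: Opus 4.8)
The plan is to assemble the theorem entirely from results already established in Sections~\ref{sec:descent for KH}--\ref{sec:independence of resolution}; essentially no new argument is needed, only a careful bookkeeping of what has been proven and under which hypotheses. First I would fix a good resolution $p \colon \tilde X \to X$ (which exists by resolution of singularities in characteristic zero), and by Lemma~\ref{H^i(E,Z) finitely generated} and the reduction at the start of Section~\ref{sec:descent for KH} replace $KH_{1-n}(X)$ by $\bigoplus_i KH_{2-n}(E_i)$, reducing to the connected case. The $1$-motive $M = M_E = [L_E \to G_E]$ and the map $\alpha \colon G_E(k) \to KH_{2-n}(E) \cong KH_{1-n}(X)$ were constructed as the composite $G_E(k) \to H^{n-1}_{\cdh}(E,\mathbb G_m) \to KH_{2-n}(E)$ of the pullback map from diagram~\eqref{diagram:pullback to get TGA} with the edge map of the descent spectral sequence; that $M_E$ is a genuine $1$-motive is the content of the Proposition in Section~\ref{sec:1-motive}, so I would simply invoke it.

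Next I would record the finiteness of $\ker(\alpha)$ and $\coker(\alpha)$. The cokernel is described by the short exact sequence~\eqref{cokeralpha, arbitrary n}, whose outer terms $\coker(\NS)$ and $H^{n-2}(\mathcal D(E),\Z)$ are finitely generated (the first because N\'eron--Severi groups are finitely generated, the second by Lemma~\ref{H^i(E,Z) finitely generated}); the kernel is described by~\eqref{keralpha, arbitrary n}, whose outer terms $\ker(\beta)$ (a quotient of the finitely generated $\ker(\NS)$) and $\im(d_2^{n-3,0}) \subseteq H^{n-1}_{\cdh}(E,\mathbb G_m)$, a quotient of the finitely generated $H^{n-3}_{\cdh}(E,\Z)$, are again finitely generated. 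Naturality in $X$ follows from naturality of the \emph{cdh}-descent spectral sequence together with the functoriality of the construction of $M_E$ with respect to the blowup square; I would note that this is where the map $M_E \to M$ of Proposition~\ref{1-motive independence of resolution} gets upgraded to give the resolution-independent formulation. The independence-of-resolution claims for $\ker(\alpha)$, $\ker(\beta)$, and the non-lattice parts are exactly the statements collected in Section~\ref{sec:independence of resolution}: $\coker(\Pic^0)$, $\coker(\NS)$, and $H^{n-1}_{\cdh}(E,\mathbb G_m) \cong H^n_{\cdh}(X,\mathbb G_m)$ are all resolution-independent, so $G_E$, $T_E$, and hence $\alpha$ are as well, while only $L_E = \ker(\NS)$ may vary, and only when $n=3$ (Example~\ref{example of ker(NS) not independent of choice of resolution}).

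For the Hodge-theoretic identification I would, as in Section~\ref{sec:1-motive}, base extend to $\C$ (legitimate after first replacing $k$ by an algebraically closed subfield of small cardinality embedding into $\C$, which does not affect negative $K$-theory) and quote \cite[Theorem 0.1]{B-VRS03}: the free part $(M_{E,\C})_{\mathrm{fr}}$ corresponds under Deligne's equivalence~\eqref{eq:MHS, 1-motive equivalence} to the largest torsion-free sub-MHS of the stated type in $W_2H^{n-1}(E(\C),\Z)$ with polarizable $\Gr_1^W$. Then Proposition~\ref{1-motive independence of resolution}, via the long exact sequence~\eqref{singular cohomology les} of the blowup square, gives the weight-$2$ isomorphism $W_2H^{n-1}(E,\Z) \cong W_2H^n(\overline X,\Z)$ (surjection only when $n=3$, which affects only the lattice $L$), identifying $H$ inside $W_2H^n(\overline X(\C),\Z)$; the passage from $X$ to a compactification $\overline X$ smooth along the boundary uses $KH_{1-n}(X) \cong KH_{2-n}(E) \cong KH_{1-n}(\overline X)$ as in the remark preceding the theorem. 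Finally, the extra exactness of~\eqref{eq:exactness_at_G(k)} when $n=3$ is immediate from Lemma~\ref{d_2^{0,0} is the zero map}: there the edge map $H^2_{\cdh}(E,\mathbb G_m) \to KH_{-1}(E)$ is injective, so $\ker(\alpha) = \ker(\beta)$, which is precisely the image of $\ker(\NS)$ and hence (by the Remark after Proposition~\ref{1-motive independence of resolution}) of $L(k)$. The only real subtlety — the point I would treat most carefully rather than routinely — is verifying that the construction of $\alpha$ and $M_E$ is natural enough in $X$ to descend to a well-defined map $G(k) \to KH_{1-n}(X)$ for the $1$-motive $M$ attached to $X$ itself, as opposed to the resolution-dependent $M_E$; this is where one must combine the factorization $L_E \twoheadrightarrow L \to G$ with the identifications of Section~\ref{sec:independence of resolution}.
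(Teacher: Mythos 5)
Your proposal is correct and follows essentially the same route as the paper, which offers no separate argument for this theorem but simply assembles it from the construction of $M_E$ and $\alpha$ in sections \ref{sec:descent for KH}--\ref{sec:1-motive}, the finiteness statements \eqref{keralpha, arbitrary n} and \eqref{cokeralpha, arbitrary n}, Proposition \ref{1-motive independence of resolution} together with \cite[Theorem 0.1]{B-VRS03} for the Hodge-theoretic identification and resolution/compactification independence, and Lemma \ref{d_2^{0,0} is the zero map} for the extra exactness when $n=3$. Your closing caveat about descending from $M_E$ to the resolution-independent $M$ via $L_E \twoheadrightarrow L \to G$ is exactly how the paper handles that point.
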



\section{Calculation of $NK_{1-n}(X)$}
\label{sec:NK}

We now turn our attention towards $NK_{1-n}(X)$, the other remaining contribution to $K_{1-n}(X)$. For this section, let $k$ be a field of characteristic zero (not necessarily algebraically closed) and $X$ be a (not necessarily irreducible) $n$-dimensional variety over $k$ with isolated singularity $x_0$. We first establish the exact sequence \eqref{eq:exact sequence calculating K_n} referred to in the introduction.

\begin{lem} \label{lem:K_{1-n}(X) exact sequence}
There is an exact sequence

\begin{equation}
\xymatrix{
NK_{1-n}(X) \ar[r]^-{d_1^{1,1-n}} & K_{1-n}(X) \ar[r] & KH_{1-n}(X) \ar[r] & 0.
}
\label{K_{1-n}(X) exact sequence}
\end{equation}
\end{lem}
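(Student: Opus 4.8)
The plan is to derive the sequence from the general machinery relating $K$-theory, homotopy $K$-theory, and the functor $NK$. Recall that for any scheme $X$ there is a functorial fibration sequence (the "Bass delooping / cyclic bar" comparison) expressing $KH$ as the geometric realization of the simplicial spectrum $[p] \mapsto K(X \times \Delta^p)$, equivalently the total spectrum of the cosimplicial object built from $K$ of polynomial extensions. This gives a conditionally convergent spectral sequence
\begin{equation}
E_1^{p,q} = N^pK_{-q}(X) \Longrightarrow KH_{-p-q}(X),
\label{eq:NK-to-KH-ss}
\end{equation}
where $N^0K = K$ and $N^pK$ is the $p$-fold iterate of the $NK$ construction, with $d_1$ the usual alternating sum of the maps induced by the face inclusions. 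This is the spectral sequence from which the differential $d_1^{1,1-n}$ in the statement takes its name, so first I would recall its construction (Weibel's "Homotopy Algebraic K-theory", or the treatment via the $K$-theory of $X[t]$ and the contracting homotopy) and its fourth-quadrant vanishing.

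Next I would locate the relevant corner of the spectral sequence. By Lemma (the sheaf computation) together with the vanishing of negative $K$-theory of regular rings, one knows $K_{-q}(X) = 0$ for $q > n$ when $X$ has dimension $n$ — more precisely, the dimension bound for $X$ with isolated singularities forces $K_j(X) = 0$ for $j < -n$, and likewise $N^pK_j(X) = 0$ in that range since $N^pK$ only lowers connectivity in a controlled way and $X \times \mathbb{A}^p$ still has dimension $n+p$ with singular locus of dimension $0$. The point is that in total degree $1-n$, i.e. on the line $p + q = n - 1$, the only possibly nonzero entries on the $E_1$ page are $E_1^{0,1-n} = K_{1-n}(X)$ and $E_1^{1,1-n} = NK_{1-n}(X)$: the entries with $p \ge 2$ vanish because $N^pK_{1-n-p}(X)$ sits below the dimension line. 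Hence the edge of the spectral sequence on this diagonal collapses to the three-term exact sequence
\begin{equation}
NK_{1-n}(X) \xrightarrow{\ d_1^{1,1-n}\ } K_{1-n}(X) \longrightarrow KH_{1-n}(X) \longrightarrow 0,
\label{eq:plan-final}
\end{equation}
the surjectivity onto $KH_{1-n}(X)$ being the statement that $E_\infty^{0,1-n}$ is the whole associated graded piece, which again follows from the vanishing of all higher $E_\infty^{p, 1-n-p}$.

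The main obstacle is bookkeeping the vanishing range carefully: I need to be sure that \emph{every} group $N^pK_{-q}(X)$ with $p \ge 2$ and $p + q = n-1$ (so $q = n - 1 - p \le n - 3$, i.e. $-q \ge 3 - n + p \ge$ well below $1-n$) really is zero, and that the convergence of \eqref{eq:NK-to-KH-ss} is unproblematic in this degree — boundedness on the relevant antidiagonal is what I must check, and this is exactly where the dimension/isolated-singularity hypothesis enters. I would cite the $K$-dimension theorem (or Weibel's vanishing $K_j(X) = 0$ for $j < -\dim X$ in characteristic zero, a consequence of cdh-descent and the regularity of resolutions) and note that $NK$ and its iterates preserve this kind of bound because $X[t_1,\dots,t_p]$ has the same singular locus, of dimension zero, while its Krull dimension grows by $p$ — so $N^pK_j(X) = 0$ for $j < p - n$, which is precisely the range needed. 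Once that vanishing is in hand, the exact sequence \eqref{eq:plan-final} is just the statement that a fourth-quadrant spectral sequence with only two nonzero entries on a given antidiagonal degenerates there, which is a formal diagram chase; I would spell this out in one or two lines rather than invoking a black box, since the reader will want to see why the map is literally $d_1$.
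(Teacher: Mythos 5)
Your overall strategy is the same as the paper's: Weibel's strongly convergent right half-plane spectral sequence $E^1_{p,q} = N^pK_q(X) \Rightarrow KH_{p+q}(X)$ combined with the $K$-dimension theorem of \cite{CHSW08}. However, two points in your bookkeeping are genuinely wrong. First, the indexing: in the homological spectral sequence, $K_{1-n}(X) = E^1_{0,1-n}$ and $NK_{1-n}(X) = E^1_{1,1-n}$ sit in the same \emph{row} (same $K$-degree) and are joined by $d^1$; they are not on the same antidiagonal. The antidiagonal of total degree $1-n$ consists of the groups $N^pK_{1-n-p}(X)$, $p \ge 0$. What must be verified is (i) $N^pK_{1-n-p}(X) = 0$ for all $p \ge 1$, so that $KH_{1-n}(X) = E^{\infty}_{0,1-n}$, and (ii) $N^rK_{2-n-r}(X) = 0$ for all $r \ge 2$, so that no differential after $d^1$ can hit the corner $(0,1-n)$ and hence $E^{\infty}_{0,1-n} = \coker(d^1_{1,1-n})$. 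With the transcription $E_1^{p,q} = N^pK_{-q}(X) \Rightarrow KH_{-p-q}(X)$ that you state, the entries on your line $p+q = n-1$ would be $N^pK_{p+1-n}(X)$, and those do \emph{not} vanish in general (e.g. $N^2K_{3-n}(X)$), so the argument as literally written does not go through.

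Second, the vanishing claim you rely on, $N^pK_j(X) = 0$ for $j < p-n$, is false: Section \ref{sec:NK} computes $N^2K_{1-n}(X) \cong H^{n-1}_{\cdh}(U,\mathcal{O}) \otimes_{\Q} s_0\Q[s_0] \otimes_{\Q} s_1\Q[s_1]$, which is nonzero whenever $b^{0,n-1} \neq 0$, even though $1-n < 2-n$. Your justification is also incorrect: the singular locus of $X \times \mathbb{A}^p$ is $\{x_0\} \times \mathbb{A}^p$, of dimension $p$, not $0$, and a naive dimension count on $X \times \mathbb{A}^p$ only yields $K_j(X \times \mathbb{A}^p) = 0$ for $j < -(n+p)$, which misses every group in (i) and (ii). The input actually needed is the full statement of the $K$-dimension theorem in characteristic zero: $K_q(X) = 0$ for $q < -n$ \emph{and} $X$ is $K_q$-regular for all $q \le -n$, equivalently $N^pK_q(X) = 0$ for all $p \ge 1$ and $q \le -n$. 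Since every group in (i) and (ii) has $K$-degree at most $-n$, this one citation closes the argument \--- and this is exactly how the paper's proof proceeds; no auxiliary bound involving the dimension of $X \times \mathbb{A}^p$ is needed or available.
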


\begin{proof}
There is a strongly convergent, homological right half-plane spectral sequence \cite{Wei89}

\begin{equation} \label{eq: NK spectral sequence abutting to KH}
E^1_{p,q} = N^pK_q(X) \Longrightarrow KH_{p+q}(X)
\end{equation}

\medskip
The $K$-dimension theorem \cite[Conjecture 0.1]{CHSW08} implies that the groups $N^pK_{-q}(X)$ are zero whenever $q \ge n$ and $p \ge 1$.

\medskip
\noindent So we can see that there are no nonzero differentials coming into or going out of each $E_m^{0,1-n}$ after the first page, so that $E^{\infty}_{0,1-n} = E^2_{0,1-n}$. In addition, all of the groups $E^{\infty}_{p,1-n-p}$ are zero, except when $p = 0$. This gives us the exact sequence we are looking for.
\end{proof}

\medskip
We now reduce to the case when $X$ is affine.

\begin{lem} \label{N^tK_{-q} independent of open nbhd}
$N^tK_{-q}(X) \cong N^tK_{-q}(U)$ for any $q \in \Z$, any $t \ge 1$, and any open $U \subset X$ containing the isolated singularity $x_0$.
\end{lem}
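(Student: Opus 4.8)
The plan is to show that negative (and higher-$N$) $K$-theory of $X$ with an isolated singularity is a local invariant of the singularity, in the sense that it only depends on any Zariski open neighbourhood of $x_0$. The key input is that away from $x_0$ the scheme $U \setminus \{x_0\}$ is regular, so all of its negative $K$-groups and all of its $N^t K_{-q}$ vanish; combined with a localization (or Mayer--Vietoris) argument this forces the restriction map to be an isomorphism. I would proceed as follows.

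First I would reduce to comparing $X$ with a single affine open $U \ni x_0$; since any open containing $x_0$ contains an affine open containing $x_0$, and the restriction maps compose, it suffices to treat the case $U \subseteq X$ affine open. Next, let $C = X \setminus U$, a closed subset not containing $x_0$, so that $X$ is regular in a neighbourhood of $C$. The functor $NK = N^1 K$ and its iterates $N^t K$ inherit the localization/excision properties of $K$-theory: there is a homotopy fibre sequence relating $NK(X)$, $NK(U)$, and a term supported on $C$. Concretely, one applies the functor $N^t(-)$ (which is exact, being a summand of the cofibre of $K(-) \to K(-[\mathbb{A}^1])$ iterated) to the fibre sequence $K(X \text{ on } C) \to K(X) \to K(U)$, obtaining $N^t K(X \text{ on } C) \to N^t K(X) \to N^t K(U)$. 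Since $X$ is regular along $C$, the relative term $K(X \text{ on } C)$ agrees with the corresponding term for a regular scheme, and regular schemes are $K$-regular, hence $N^t K(X \text{ on } C) \simeq 0$ for all $t \ge 1$. Taking homotopy groups gives $N^t K_{-q}(X) \cong N^t K_{-q}(U)$ for all $q$ and all $t \ge 1$, which is exactly the claim; the restriction of $q$ to negative values is not even needed, but we only use it for $q \ge n-1$.

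The main obstacle is making precise the claim that the term ``supported on $C$'' is $K$-regular. The honest way is to invoke Thomason--Trobaugh localization together with the fact that $K$-theory with supports in $C$ depends, via excision, only on an open neighbourhood of $C$ in $X$ — which is regular — and then use that $K$-theory of regular schemes (more generally, $K$-theory with supports on a closed subset of a regular scheme) is $\mathbb{A}^1$-invariant, so killed by every $N^t$ with $t \ge 1$. One must be slightly careful that $X$ need not be regular globally, only near $C$; but excision lets us replace $X$ by such a neighbourhood without changing the term with supports. Alternatively, one can bypass supports entirely and argue directly from $K$-regularity: it suffices to show that the cofibre of $NK(X) \to NK(U)$ vanishes, and this cofibre is built (by the localization sequence for $K$ and its $\mathbb{A}^1$-deloopings, i.e. by $KH$-style arguments) out of $K$-theory of the regular locus near $C$, which is $\mathbb{A}^1$-invariant. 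Either route reduces the lemma to the standard fact that $N^t$ annihilates the $K$-theory of regular schemes, so no genuinely new computation is required.
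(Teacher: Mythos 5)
Your proof is correct, but it takes a genuinely different route from the paper. The paper does not use localization with supports at all: it invokes the Thomason--Trobaugh Zariski descent spectral sequence $E_2^{p,q}=H^p_{\Zar}(X,a_{\Zar}N^tK_q)\Rightarrow N^tK_{-p-q}(X)$, observes that $N^tK_{-q}$ vanishes on smooth opens (smooth schemes are $K$-regular), so the Zariski sheaf $a_{\Zar}N^tK_{-q}$ is a skyscraper at $x_0$, hence flasque; the spectral sequence then degenerates and identifies both $N^tK_{-q}(X)$ and $N^tK_{-q}(U)$ with the sections of this one skyscraper sheaf, which agree on any open containing $x_0$. Your argument instead applies $N^t$ to the Thomason--Trobaugh fibre sequence relating $K(U)$, $K(X)$ and the $K$-theory of $X$ with supports in $C=X\setminus U$, and kills the supported term by excision into the regular open $X\setminus\{x_0\}$ together with d\'evissage ($K$-theory with supports on a regular scheme is $G$-theory of the support, hence $\mathbb{A}^1$-invariant); this is a valid chain of standard facts, and the compatibility of the localization sequences for $X$ and $X\times\mathbb{A}^t$ that you need is indeed available by functoriality. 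What each approach buys: yours avoids sheafification entirely, exhibits the isomorphism directly as the restriction map with an explicitly vanishing relative term, and works uniformly for every $q\in\Z$ and every quasi-compact open $U\ni x_0$ (your preliminary reduction to affine $U$ is actually unnecessary); the paper's argument is shorter given that it already cites the descent spectral sequence elsewhere, and it additionally packages $N^tK_{-q}$ as the sections of a skyscraper sheaf at $x_0$, which makes the ``this invariant only depends on the singularity'' intuition literal and is the form used in the subsequent discussion.
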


\begin{proof}
We have a spectral sequence \cite[Theorem 10.3]{TT90}

\begin{equation} \label{eq:spectral sequence abutting to N^tK_q}
E_2^{p,q} = H_{\Zar}^p(X,a_{\Zar}N^tK_q) \Longrightarrow N^tK_{-p-q}(X)
\end{equation}

\medskip
We apply the spectral sequence \eqref{eq:spectral sequence abutting to N^tK_q} to $X$. Because smooth schemes are $K_{-q}$-regular, it follows that for any smooth open subscheme $U \subset X$, we have $N^tK_{-q}(U) = 0$ whenever $t \ge 1$, as we have indicated above. Since $X$ has only a singularity at $x_0$, we have $N^tK_{-q}(U) = 0$ whenever $x_0 \notin U$. It follows that the Zariski sheaf $aN^tK_{-q}$ is a skyscraper sheaf supported at $x_0$. In particular, $aN^tK_{-q}$ is flasque, so it has no higher cohomologies. Since $E_2^{p,q} = 0$ for $p \ne 0$, all differentials are zero, so we conclude that $E_2^{p,q} = E_{\infty}^{p,q}$, and thus $H_{\Zar}^0(X,aN^tK_{-q}) \cong N^tK_{-q}(X)$. But since $aN^tK_{-q}$ is a skyscraper sheaf, we have $(aN^tK_{-q})(U) = (aN^tK_{-q})(X)$, proving the claim.
\end{proof}

\medskip
\noindent In particular, we may choose $U = \Spec R$ to be an open affine neighborhood of $x_0$. The intuition here is that since the $N^tK_{-q}$-groups are zero on smooth schemes, they only detect singularities, and their value depends only on the type of singularity involved.

\medskip
\noindent Recall that we are interested in the case $q = n-1$. Cortin\~{a}s, et al. \cite[Example 3.5, Proposition 4.1]{CHWW10a} elucidates the structure of the $N^pK_q$ groups, which, specializing to $p = 1$ and $q = n-1$, gives

\begin{equation}
NK_{1-n}(X) \cong NK_{1-n}(U) \cong H^{n-1}_{\cdh}(U,\mathcal{O}) \otimes_{\Q} t\Q[t].
\label{NK_{1-n}(U) description}
\end{equation}

\medskip
\noindent

\medskip
\noindent The maps in the spectral sequence \eqref{eq: NK spectral sequence abutting to KH} are induced by the maps on the simplicial structure of $X \times \mathbb{A}^{\bullet}$; in particular,

\begin{equation} \label{}
\xymatrix{
NK_{-q}(X) = \ker(\partial_0 : K_{-q}(X \times \mathbb{A}^1) \ar[r]^-{t = 0} & K_{-q}(X)),}
\end{equation}

\medskip
\noindent where $t$ is the parameter of $\mathbb{A}^1$ \--- the same $t$ as in \eqref{NK_{1-n}(U) description}. The decomposition \eqref{NK_{1-n}(U) description}, found in \cite{CHWW10a}, boils down to applying the K\"{u}nneth formula for Hochschild homology \cite[Proposition 9.4.1]{WHo94}

\begin{equation} \label{eq:Kunneth formula for HH}
HH_n(R[t]) \cong \oplus_{i+j=n} HH_i(R) \otimes_{\Q} HH_j(\Q[t]),
\end{equation}

\medskip
\noindent from which we see that the $t$ in the $\Q[t]$ is indeed the parameter $t$ in the copy of $\mathbb{A}^1$ when computing the $N$-functors.

\medskip
\noindent The differential $\partial_0 - \partial_1 \colon K_{1-n}(X \times \mathbb{A}^1) \longrightarrow K_{1-n}(X)$ reduces to just $-\partial_1$ on $NK_{1-n}(U) = \ker(\partial_0)$, and $\partial_1$ just sets $t = 1$. Therefore, the image of $NK_{1-n}(X)$ in $K_{1-n}(X)$ is isomorphic to $H^{n-1}_{\cdh}(U,\mathcal{O})$. In summary, we have proven that

\begin{prop} \label{prop:SES describing K_{1-n}(X)}
There is a short exact sequence

\begin{equation} \label{SES describing K_{1-n}(X)}
\xymatrix{
0 \ar[r] & H^{n-1}_{\cdh}(U,\mathcal{O}) \ar[r] & K_{1-n}(X) \ar[r] & KH_{1-n}(X) \ar[r] & 0}.
\end{equation}
\end{prop}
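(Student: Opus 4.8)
The statement assembles two facts that have already been established in the section: the exact sequence from Lemma~\ref{lem:K_{1-n}(X) exact sequence}, and the computation of $NK_{1-n}(X)$ and of its image in $K_{1-n}(X)$. The plan is to recall the exact sequence
\begin{equation*}
\xymatrix{
NK_{1-n}(X) \ar[r]^-{d_1^{1,1-n}} & K_{1-n}(X) \ar[r] & KH_{1-n}(X) \ar[r] & 0,
}
\end{equation*}
and then identify the image of the first map with $H^{n-1}_{\cdh}(U,\mathcal{O})$. Once that identification is made, exactness of the displayed sequence in Proposition~\ref{prop:SES describing K_{1-n}(X)} is immediate: the image of $NK_{1-n}(X)$ in $K_{1-n}(X)$ is precisely $H^{n-1}_{\cdh}(U,\mathcal{O})$, and the quotient $K_{1-n}(X)/H^{n-1}_{\cdh}(U,\mathcal{O})$ is $KH_{1-n}(X)$.

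The work, therefore, is all in pinning down the map $d_1^{1,1-n}$. First I would invoke Lemma~\ref{N^tK_{-q} independent of open nbhd} to replace $X$ by an affine open neighborhood $U = \Spec R$ of $x_0$, so that $NK_{1-n}(X) \cong NK_{1-n}(U)$. Next I would quote the structural result of Corti\~nas--Haesemeyer--Weibel--Walker \cite{CHWW10a}, which gives the explicit decomposition
\begin{equation*}
NK_{1-n}(U) \cong H^{n-1}_{\cdh}(U,\mathcal{O}) \otimes_{\Q} t\Q[t],
\end{equation*}
keeping careful track of the fact that the polynomial variable $t$ appearing here is genuinely the coordinate on the $\mathbb{A}^1$ used to define the $N$-construction — this is what the K\"unneth formula \eqref{eq:Kunneth formula for HH} for Hochschild homology makes precise. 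The point of this bookkeeping is that the differential $d_1^{1,1-n}$ in the spectral sequence \eqref{eq: NK spectral sequence abutting to KH} is the alternating sum $\partial_0 - \partial_1$ of the two face maps $K_{1-n}(X\times\mathbb{A}^1)\to K_{1-n}(X)$; since $NK_{1-n}$ is by definition $\ker(\partial_0)$, on this subgroup $d_1^{1,1-n}$ restricts to $-\partial_1$, which is evaluation at $t=1$.

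Applying $t \mapsto 1$ to $H^{n-1}_{\cdh}(U,\mathcal{O}) \otimes_{\Q} t\Q[t]$ sends $v\otimes t^m \mapsto v$, so the induced map onto its image is the surjection $H^{n-1}_{\cdh}(U,\mathcal{O}) \otimes_{\Q} t\Q[t] \twoheadrightarrow H^{n-1}_{\cdh}(U,\mathcal{O})$, and hence $\im(d_1^{1,1-n}) \cong H^{n-1}_{\cdh}(U,\mathcal{O})$. Combining this with the exact sequence of Lemma~\ref{lem:K_{1-n}(X) exact sequence} yields the short exact sequence
\begin{equation*}
\xymatrix{
0 \ar[r] & H^{n-1}_{\cdh}(U,\mathcal{O}) \ar[r] & K_{1-n}(X) \ar[r] & KH_{1-n}(X) \ar[r] & 0,
}
\end{equation*}
as claimed. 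The main obstacle is the second paragraph: one must be confident that the variable $t$ in \eqref{NK_{1-n}(U) description} coming out of \cite{CHWW10a} is literally the $\mathbb{A}^1$-parameter governing the face maps $\partial_i$, so that setting $t=1$ is a legitimate description of $\partial_1$ on the subgroup $NK_{1-n}$ — everything else is formal once that compatibility is in hand.
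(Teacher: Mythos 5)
Your proposal is correct and follows the paper's own argument essentially step for step: the exact sequence of Lemma \ref{lem:K_{1-n}(X) exact sequence}, the reduction to an affine neighborhood $U$ of $x_0$ via Lemma \ref{N^tK_{-q} independent of open nbhd}, the decomposition $NK_{1-n}(U)\cong H^{n-1}_{\cdh}(U,\mathcal{O})\otimes_{\Q}t\Q[t]$ from \cite{CHWW10a}, and the identification (via the K\"unneth formula for Hochschild homology) of $d_1^{1,1-n}$ restricted to $\ker(\partial_0)$ with evaluation at $t=1$, so that the image of $NK_{1-n}(X)$ in $K_{1-n}(X)$ is $H^{n-1}_{\cdh}(U,\mathcal{O})$. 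Nothing essential differs from the paper's treatment.
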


\begin{rem}
The observation here that the maps in the spectral sequence come from the simplicial structure on $X \times \mathbb{A}^{\bullet}$ can be taken further. For example, we can say something about $K_{2-n}(X)$. Proceeding as in the computation of $NK_{1-n}(X)$, we have, via \cite[Corollary 4.2]{CHWW10a},

\begin{equation}
\begin{split}
N^2K_{1-n}(U) & \cong NK_{1-n}(U) \otimes_{\Q} s_1\Q[s_1] \\
             & \cong H^{n-1}_{\cdh}(U,\mathcal{O}) \otimes_{\Q} s_0\Q[s_0] \otimes_{\Q} s_1\Q[s_1].
\label{}
\end{split}
\end{equation}

\medskip
\noindent The top face map from $K_{1-n}(X \times \mathbb{A}^2) \longrightarrow K_{1-n}(X \times \mathbb{A}^1)$ sends $1-s_0-s_1$ to zero, so it sends $s_0$ to $t$ and $s_1$ to $1-t$. Therefore, the image of $d_1^{2,1-n}$ in $NK_{1-n}(X)$ is just $H^{n-1}_{\cdh}(U,\mathcal{O}) \otimes_{\Q} t(1-t)\Q[t]$, which is precisely the kernel of the map $\partial_1 = (t \mapsto 1)$. The $E_1$ page of the spectral sequence is therefore exact at $(1,1-n)$, and so $E_2^{1,1-n} = 0$. We may make the same argument for the map $d_1^{3,1-n} : N^3K_{1-n}(X) \longrightarrow N^2K_{1-n}(X)$. Let us write

\begin{equation}
\begin{split}
N^3K_{1-n}(X) & \cong NK_{1-n}(X) \otimes r_1\Q[r_1] \otimes r_2\Q[r_2] \\
              & \cong H^{n-1}_{\cdh}(U,\mathcal{O}) \otimes r_0\Q[r_0] \otimes r_1\Q[r_1] \otimes r_2\Q[r_2] \\
N^2K_{1-n}(X) & \cong H^{n-1}_{\cdh}(U,\mathcal{O}) \otimes s_0\Q[s_0] \otimes s_1\Q[s_1].
\label{}
\end{split}
\end{equation}

\medskip
\noindent The differential coming out of $N^3K_{1-n}(X)$ is just the one that sends $1-r_0-r_1-r_2$ to $0$, so $r_0 \mapsto s_0$, $r_1 \mapsto s_1$, $r_2 \mapsto 1-s_0-s_1$, so the image of this map is just

\begin{equation} \label{}
(1-s_0-s_1) (H^{n-1}_{\cdh}(U,\mathcal{O}) \otimes s_0\Q[s_0] \otimes s_1\Q[s_1]) = H^{n-1}_{\cdh}(U,\mathcal{O}) \otimes s_0s_1(1-s_0-s_1)\Q[s_0,s_1],
\end{equation}

\medskip
\noindent which is precisely the kernel of the map $\partial_2 = d_1^{2,1-n}$. Thus $0 = E_3^{2,1-n} = E_{\infty}^{2,1-n}$, and we conclude that we have an exact sequence

\begin{equation}
\xymatrix{
NK_{2-n}(X) \ar[r] & K_{2-n}(X) \ar[r] & KH_{2-n}(X) \ar[r] & 0.
}
\label{}
\end{equation}

\medskip
\noindent In particular, the map $K_{2-n}(X) \longrightarrow KH_{2-n}(X)$ is surjective.
\end{rem}

\medskip
\noindent As we have already noted, the group $H^{n-1}_{\cdh}(U,\mathcal{O})$ in \eqref{SES describing K_{1-n}(X)} is independent of the choice of open affine neighborhood $U$ of the singularity $x_0$. The following lemma makes this statement precise.

\begin{lem}
Let $V \subset U$ be an open affine neighborhood of $x_0$. Then the inclusion $V \hookrightarrow U$ induces an isomorphism
$H^{n-1}_{\cdh}(U,\mathcal{O}) \cong H^{n-1}_{\cdh}(V,\mathcal{O})$.
\end{lem}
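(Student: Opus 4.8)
The plan is to identify $H^{n-1}_{\cdh}(U,\mathcal{O})$ with the stalk at $x_0$ of a skyscraper sheaf, in the same spirit as the proof of Lemma~\ref{N^tK_{-q} independent of open nbhd}, so that it manifestly does not change when $U$ is shrunk to $V$. For this I would use the change-of-site spectral sequence for $\epsilon\colon (U)_{\cdh}\to(U)_{\Zar}$,
\[
E_2^{p,q}=H^p_{\Zar}(U,\mathcal{H}^q)\ \Longrightarrow\ H^{p+q}_{\cdh}(U,\mathcal{O}),
\]
where $\mathcal{H}^q$ denotes the Zariski sheafification of the presheaf $W\mapsto H^q_{\cdh}(W,\mathcal{O})$ on $U$.

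The crucial input is the comparison theorem of \cite{CHSW08}: over a field of characteristic zero, $H^q_{\cdh}(W,\mathcal{O})\cong H^q_{\Zar}(W,\mathcal{O})$ for $W$ smooth. Consequently, if $W\subseteq U$ is a sufficiently small affine open not containing $x_0$, then $W$ is smooth and affine, so $H^q_{\cdh}(W,\mathcal{O})=0$ for all $q\ge1$; passing to the colimit over such $W$ around any point $y\ne x_0$ shows $(\mathcal{H}^q)_y=0$, i.e.\ $\mathcal{H}^q$ is a skyscraper sheaf supported at $x_0$ (hence flasque) for every $q\ge1$. The remaining sheaf $\mathcal{H}^0$ is coherent --- it is $\mathcal{O}_U$ if $U$ is seminormal, and in general $\pi_*\mathcal{O}_{U^{\mathrm{sn}}}$ for the finite seminormalization $\pi\colon U^{\mathrm{sn}}\to U$ --- so $H^p_{\Zar}(U,\mathcal{H}^0)=0$ for $p\ge1$ because $U$ is affine.

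Feeding these vanishings into the spectral sequence in total degree $n-1\ge1$: on the antidiagonal only $E_2^{0,n-1}$ can be nonzero (the term $E_2^{n-1,0}=H^{n-1}_{\Zar}(U,\mathcal{H}^0)$ vanishes, and the terms with $1\le q\le n-2$ vanish because $\mathcal{H}^q$ is a skyscraper and $p\ge1$), and all differentials into or out of $E_r^{0,n-1}$ vanish for the same reasons (their other end lies either in a negative column or in $H^{\ge1}_{\Zar}(U,-)$ of a skyscraper sheaf, or in $H^{\ge1}_{\Zar}(U,\mathcal{H}^0)$ of a coherent sheaf on the affine $U$). Hence $H^{n-1}_{\cdh}(U,\mathcal{O})\cong H^0_{\Zar}(U,\mathcal{H}^{n-1})=(\mathcal{H}^{n-1})_{x_0}$. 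Running the identical argument over $V$, the sheaf $\mathcal{H}^{n-1}|_V$ is again the skyscraper at $x_0$ with the same stalk, and the restriction map is the identity on that stalk, which yields the asserted isomorphism.

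The main obstacle is really just invoking the cdh-versus-Zariski comparison $H^q_{\cdh}(W,\mathcal{O})\cong H^q_{\Zar}(W,\mathcal{O})$ for smooth $W$ in characteristic zero; granting that, the rest is bookkeeping, with the seminormalization handling the $q=0$ row. For readers who prefer to stay inside this paper, there is a shortcut avoiding the spectral sequence altogether: applying Lemma~\ref{N^tK_{-q} independent of open nbhd} with ``$X$'' taken to be $U$ shows the restriction $NK_{1-n}(U)\to NK_{1-n}(V)$ is an isomorphism, and under the natural identification \eqref{NK_{1-n}(U) description} this restriction is $\bigl(H^{n-1}_{\cdh}(U,\mathcal{O})\to H^{n-1}_{\cdh}(V,\mathcal{O})\bigr)\otimes_{\Q}\mathrm{id}_{t\Q[t]}$; since $t\Q[t]$ is a nonzero free, hence faithfully flat, $\Q$-module, tensoring with it reflects isomorphisms, so the restriction on $H^{n-1}_{\cdh}(-,\mathcal{O})$ is an isomorphism. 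This route needs only that the decomposition \eqref{NK_{1-n}(U) description} is natural in $U$.
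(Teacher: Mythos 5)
Your proof is correct, but it follows a different route from the paper's. The paper argues by \v{C}ech descent: it covers $U$ by $V$ together with an open $V'$ missing $x_0$, refines $V'$ by smooth affine opens, and applies the \v{C}ech-to-derived spectral sequence of this cover, so that in total degree $n-1\ge 1$ only $H^{n-1}_{\cdh}(V,\mathcal{O})$ survives and the edge map is restriction. You instead localize at the singular point via the Leray spectral sequence for the change of topology, showing the higher direct images $\mathcal{H}^q$, $q\ge 1$, are skyscrapers at $x_0$ --- the same mechanism as the paper's Lemma \ref{N^tK_{-q} independent of open nbhd} --- which yields the slightly sharper statement $H^{n-1}_{\cdh}(U,\mathcal{O})\cong(\mathcal{H}^{n-1})_{x_0}$, visibly independent of the neighborhood. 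Both arguments rest on the same two inputs: the comparison $H^q_{\cdh}(W,\mathcal{O})\cong H^q_{\Zar}(W,\mathcal{O})$ for smooth $W$ in characteristic zero (correctly taken from \cite{CHSW08}; note the paper's appeal to Theorem \ref{lem:cdh-Zar-cohomology-isomorphism} does not literally cover $\mathcal{O}$, which is not homotopy invariant), and control of the $q=0$ row, which you make explicit via $\mathcal{H}^0\cong\pi_*\mathcal{O}_{U^{\mathrm{sn}}}$ (quasi-coherent, hence no higher cohomology on the affine $U$); the paper's terse ``standard \v{C}ech spectral sequence argument'' implicitly needs the same fact to kill the bottom-row terms such as $E^{n-1,0}$. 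Your shortcut --- Lemma \ref{N^tK_{-q} independent of open nbhd} applied to $U$, the identification \eqref{NK_{1-n}(U) description}, and faithful flatness of $t\Q[t]$ over $\Q$ --- is also valid and has the virtue of recycling results already quoted in this section, at the acknowledged cost of requiring naturality in $U$ of the decomposition from \cite{CHWW10a}, which does hold since that identification is induced by natural maps of cdh-fibrant Hochschild complexes.
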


\begin{proof}
Take a Nisnevich cover $\{V \longrightarrow U, V' \longrightarrow U\}$, and then cover $V'$ by open affines $V'_i$. Since $V'$ is smooth, so are all of the $V'_i$, and in particular, they have no higher \emph{cdh}-cohomology groups (Theorem \ref{lem:cdh-Zar-cohomology-isomorphism}). A standard \v{C}ech spectral sequence argument then shows that the induced map is an isomorphism.
\end{proof}

\medskip
\noindent Alternatively, this isomorphism can be obtained directly from Proposition \ref{prop:SES describing K_{1-n}(X)}, by seeing that the kernel $H^{n-1}_{\cdh}(U,\mathcal{O})$ of the map $K_{1-n}(X) \longrightarrow KH_{1-n}(X)$ is independent of the choice of open affine neighborhood $U$ containing the isolated singularity $x_0$.

\medskip
\noindent The discussion using the decomposition \eqref{NK_{1-n}(U) description} yielding the short exact sequence \eqref{SES describing K_{1-n}(X)} is a reasonable description of $K_{1-n}(X)$, but \emph{cdh}-cohomology groups are often difficult to compute. It turns out that we can be more explicit in our description of $K_{1-n}(X)$ in the exact sequence \eqref{eq:spectral sequence abutting to N^tK_q} by identifying the term $H_{\cdh}^{n-1}(U,\mathcal{O})$ in terms of known invariants of the singularity $x_0$.

\begin{defn}
Let $R$ be a finite type $k$-algebra such that $U = \Spec R$ has only isolated singularities. The \emph{generalized Du Bois invariants} $b^{p,q}$ for $p \ge 0, q \ge 1$ are

\begin{equation} \label{defn:Du Bois invariants}
b^{p,q} = \length H^q_{\cdh}(U,\Omega^p).
\end{equation}
\end{defn}

\medskip
\noindent These invariants are finite by \cite{CHWW10b}. Du Bois invariants were introduced by Steenbrink \cite{Ste97}. By \cite[Lemma 2.1, Equation 2.7]{CHWW10b}, we see that $H^{n-1}_{\cdh}(U,\mathcal{O})$ is a $k$-vector space of dimension $b^{0,n-1}$. In particular, its dimension is finite.

\medskip
\noindent Finally, in the case of $n = 3$, we have a full computation of $K_{-2}(X)$.

\begin{cor} \label{cor:full computation of K_{-2}(X)}
Let $X$ be an integral threefold with an isolated singularity $x_0$. Then for any open affine $U$ containing $x_0$, $K_{-2}(X)$ is an extension of $KH_{-2}(X)$ by $H_{\cdh}^2(U,\mathcal{O})$, where $KH_{-2}(X)$ has the description given by Theorem \ref{thm:main theorem for KH_{1-n}(X)}, and $H_{\cdh}^2(U,\mathcal{O})$ is a $k$-vector space of finite dimension $b^{0,2}$.
\end{cor}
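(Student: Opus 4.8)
The plan is to assemble the corollary from three facts already established: the short exact sequence of Proposition~\ref{prop:SES describing K_{1-n}(X)}, the identification of $H^{n-1}_{\cdh}(U,\mathcal{O})$ with a finite-dimensional vector space, and the description of $KH_{1-n}(X)$ in Theorem~\ref{thm:main theorem for KH_{1-n}(X)}, all specialized to $n=3$. First I would note that an integral threefold is in particular an integral $n$-dimensional variety over $k$ with $n=3$, and that $x_0$ is an isolated singularity with $U \ni x_0$ open affine, so the hypotheses of Proposition~\ref{prop:SES describing K_{1-n}(X)} hold verbatim. Setting $n=3$ (so $1-n=-2$ and $n-1=2$) it produces the short exact sequence
\[
0 \longrightarrow H^2_{\cdh}(U,\mathcal{O}) \longrightarrow K_{-2}(X) \longrightarrow KH_{-2}(X) \longrightarrow 0,
\]
which exhibits $K_{-2}(X)$ as an extension of $KH_{-2}(X)$ by $H^2_{\cdh}(U,\mathcal{O})$.

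Next I would pin down the kernel term. By definition of the generalized Du Bois invariants, $b^{0,2} = \length H^2_{\cdh}(U,\mathcal{O})$, and this is finite by \cite{CHWW10b}. As recalled just before the statement (using \cite[Lem.~2.1, Eq.~2.7]{CHWW10b}), $H^2_{\cdh}(U,\mathcal{O})$ is a $k$-vector space, so its length equals its $k$-dimension; hence $H^2_{\cdh}(U,\mathcal{O})$ is a $k$-vector space of finite dimension $b^{0,2}$.

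Finally I would invoke the description of $KH_{-2}(X)$. Here $Z = \Sing(X)$ is the single point $x_0$, hence zero-dimensional, so $\codim Z = 3 > 2$, and the standing hypotheses of Section~\ref{sec:descent for KH} — and with them those of Theorem~\ref{thm:main theorem for KH_{1-n}(X)} — hold with $n = 3$. That theorem then supplies the $1$-motive $M = [L \to G]$ and the map $\alpha \colon G(k) \to KH_{-2}(X)$ with finitely generated kernel and cokernel, and, since $n=3$, the exactness of \eqref{eq:exactness_at_G(k)}. Combining this with the short exact sequence above gives the corollary.

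The main difficulty, such as it is, is purely bookkeeping: checking that an integral threefold with an isolated singularity falls within the scope of the two quoted results. The one point worth a remark is that Theorem~\ref{thm:main theorem for KH_{1-n}(X)} is phrased for normal $X$ while only integrality is assumed here; but the entire $KH$-computation of Sections~\ref{sec:descent for KH}--\ref{sec:independence of resolution} uses only that $X$ is reduced with $\codim Z > 2$ or $Z$ smooth, and for a threefold with isolated singularities the first alternative automatically holds, so no generality is lost.
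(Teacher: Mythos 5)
Your proposal is correct and follows exactly the route the paper intends: the corollary is simply the assembly of Proposition~\ref{prop:SES describing K_{1-n}(X)} at $n=3$, the identification $\dim_k H^2_{\cdh}(U,\mathcal{O}) = b^{0,2}$ via \cite{CHWW10b}, and Theorem~\ref{thm:main theorem for KH_{1-n}(X)} for $KH_{-2}(X)$, which is why the paper gives no separate proof. Your remark that normality in the theorem can be relaxed to the hypotheses actually used (reduced with $\codim Z > 2$, automatic for an isolated singularity on a threefold) is a sensible and accurate clarification of the bookkeeping.
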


\section*{Acknowledgements}
The author is indebted to his advisor, Christian Haesemeyer, for his patience and mathematical guidance. The author would also like to thank Adam Massey and Wanshun Wong for insightful discussions.

\bibliographystyle{amsplain}
\bibliography {articles}
\end{document}